\newtheorem{thm}{Theorem}[section]
\newtheorem{cor}[thm]{Corollary}
\newtheorem{lem}[thm]{Lemma}
\newtheorem{prop}[thm]{Proposition}
\theoremstyle{remark}
\newtheorem*{rem}{Remark}
\newcounter{remarkscounter}
\newenvironment{remarks}
{\medskip\noindent{\it
Remarks.}\begin{list}{{\rm(\arabic{remarkscounter})}
}{\usecounter{remarkscounter}

\setlength{\labelsep}{\fill} \setlength{\leftmargin}{0pt}
\setlength{\itemindent}{\fill}
\setlength{\labelwidth}{\fill}\setlength{\topsep}{0pt}
\setlength{\listparindent}{0pt}}} {\end{list}}
\numberwithin{equation}{section}
\newcommand{\A}{\mathbb{A}}
\newcommand{\GL}{\mathrm{GL}}
\newcommand{\SL}{\mathrm{SL}}
\newcommand{\ZZ}{\mathbb{Z}}
\newcommand{\PGL}{\textrm{PGL}}
\newcommand{\FF}{\mathbb{F}}
\newcommand{\Gal}{\mathrm{Gal}}
\newcommand{\QQ}{\mathbb{Q}}
\newcommand{\lto}{\longrightarrow}
\newcommand{\OO}{\mathcal{O}}
\newcommand{\CC}{\mathbb{C}}
\newcommand{\RR}{\mathbb{R}}
\newcommand{\GG}{\mathbb{G}}
\newcommand{\pp}{\mathfrak{p}}
\newcommand{\bb}{\mathfrak{b}}
\newcommand{\cc}{\mathfrak{c}}
\newcommand{\Sym}{\mathrm{Sym}}
\newcommand{\quash}[1]{}
\theoremstyle{definition}
\renewcommand{\bar}{\overline}
\numberwithin{equation}{subsection}
\renewcommand{\hat}{\widehat}
\newcommand{\et}{\mathrm{\textrm{\'{e}t}}}
\newcommand{\Res}{\mathrm{Res}}
\newcommand{\Aut}{\mathrm{Aut}}
\begin{document}
\title{Algebraic cycles and Tate classes\\ on Hilbert modular varieties}
\author{Jayce R. Getz}
\author{Heekyoung Hahn}
\address{Department of Mathematics\\
Duke University\\
Durham, NC 27708}
\email{jgetz@math.duke.edu}
\address{Department of Mathematics\\
Duke University\\
Durham, NC 27708}
\email{hahn@math.duke.edu}

\subjclass[2010]{Primary 11F41}
\thanks{The authors are thankful for partial support provided by NSERC Discovery Grants. }
\begin{abstract}
Let $E/\QQ$ be a totally real number field that is Galois over $\QQ$, and let $\pi$ be a cuspidal, nondihedral automorphic representation of $\GL_2(\A_E)$ that is in the lowest weight discrete series at every real place of $E$.   The representation $\pi$ cuts out a ``motive'' $M_{\et}(\pi^{\infty})$ from the $\ell$-adic middle degree intersection cohomology of an appropriate Hilbert modular variety.  If $\ell$ is sufficiently large in a sense that depends on $\pi$ we compute the dimension of the space of Tate classes in $M_{\et}(\pi^{\infty})$.  Moreover if the space of Tate classes on this motive over all finite abelian extensions $k/E$ 
is at most of rank one as a Hecke module, we prove that the space of Tate classes in $M_{\et}(\pi^{\infty})$ is spanned by algebraic cycles.  
\end{abstract}

\maketitle

\section{Introduction and a statement of the result}\label{intro}

Let $E/\QQ$ be a totally real number field of absolute degree $d$:
\begin{equation}\label{d}
d:=[E:\QQ].
\end{equation}
For each compact open subgroup of 
$U \leq \mathrm{Res}_{E/\QQ}\GL_2(\A^{\infty})=\GL_2(\A_E^{\infty})$ one has a Shimura variety 
$$
Y^U:=\mathrm{Sh}(\mathrm{Res}_{E/\QQ}\GL_2,(\CC-\RR)^d)^U.
$$
Here as usual $\A$ (resp. $\A_E$) is the adeles of $\QQ$ (resp. $E$), $\Res$ is the Weil restriction of scalars, and $\A^{\infty}$ (resp. $\A_E^{\infty}$) denotes the finite adeles of $\QQ$ (resp. $E$).
If $U$ is a neat subgroup then $Y^U$ is a quasi-projective (non-compact) smooth scheme over $\QQ$.  We denote by $X^U$ the Baily-Borel compactification of $Y^U$.  It is a projective scheme over $\QQ$ with isolated singularities.  
Any cohomology group of 
$Y^U$ or $X^U$ with coefficients in a ring $R$ comes equipped with an action of the Hecke algebra 
\begin{align}
C_c^{\infty}(\GL_2(\A_E^{\infty})//U):=C_c^{\infty}(\GL_2(\A_E^{\infty})//U,R)
\end{align}
of $U$-biinvariant compactly supported smooth functions with coefficients in $R$; it acts via correspondences.   

 Let $\pi$ be a cuspidal automorphic representation of $\GL_2(\A_E)$, let $\ell$ be a prime, let $\bar{\QQ}_{\ell}$ be an algebraic closure of $\QQ_{\ell}$ and let $\iota:\CC \to \bar{\QQ}_{\ell}$ be an isomorphism.  One can 
use these Hecke operators to cut out a subrepresentation $M_{\et,\iota}^U(\pi^{\infty})$ of the representation 
$H^d_{\et}(Y^U \times \bar{\QQ},\bar{\QQ}_{\ell})$ of $\Gal_{\QQ}$ that is $\pi^{\infty}$-isotypic\footnote{Here and below $\pi= \pi_{\infty} \otimes \pi^{\infty}$ where $\pi_{\infty}$ (resp.~$\pi^{\infty}$) is an admissible representation of $\GL_2(E_{\infty})$ (resp.~$\GL_2(\A_E^{\infty})$).} under the Hecke algebra.  Here $\Gal_k:=\Gal(\bar{\QQ}/k)$ is the absolute Galois group of number field $k$ with respect to a fixed choice of algebraic closure $\bar{\QQ}$ of $\QQ$. For a precise definition of $M_{\et, \iota}^U(\pi^{\infty})$ we refer the reader to \S \ref{coho} below.  We use the symbol $M$ because we think of this Galois representation as the \'etale realization of 
a motive, although we will not prove that this is the case.

\begin{rem} If $\pi$ is cuspidal, then the $\pi$-isotypic component of $H^j_{\et}(Y^U \times \bar{\QQ},\bar{\QQ}_{\ell})$ is nonzero only if $j =d$ (see, for example, \cite{Harder} and \cite{GG}).  If $\pi$ is noncuspidal, the $\pi$-isotypic component is explained in terms of either K\"ahler forms or Eisenstein series (see \cite{Harder}), and to address such classes would take us too far afield.
\end{rem}

In this paper, following the tradition of \cite{HLR}, \cite{MR} and \cite{RamaHil} our goal is to investigate how much of $M_{\et, \iota}^U(\pi^{\infty})$ is explained by the fundamental classes of algebraic cycles.  Assume that $d$ is even.  A suitable extension of the Tate conjecture \cite{Tate} to this non-compact setting is that for each finite extension $k/\QQ$ the space $M_{\et, \iota}^U(\pi^{\infty})(d/2)^{\Gal_k}$ is spanned by the classes of algebraic cycles on $Y^U \times k$.  Here $(n)$ denotes the $n$-fold Tate twist. Harder, Langlands and Rapoport \cite{HLR} proved this when $d=2$ in the nondihedral case, Murty and Ramakrishnan \cite{MR} dealt with the dihedral\footnote{Recall that an automorphic representation $\pi$ of $\GL_2(\A_E)$ is \emph{dihedral} if there is a quadratic extension $L/E$ and a Hecke character $\chi:\A_L^{\times} \to \CC^{\times}$ such that $\pi=AI(\chi)$ is the automorphic induction of $\chi$.}  case when $d=2$, and Ramakrishnan \cite{RamaHil} provided some results when $d=4$.   

We prove the following modest extension of the results of \cite{HLR}:
\begin{thm}\label{thm-main}  Let $U \leq \GL_2(\A_E^{\infty})$ be a compact open subgroup and let $\pi$ be nondihedral. Suppose that the rank of $M_{\et,\iota}^{U, ss}(\pi^{\infty})(d/2)^{\Gal_{k}}$ as a $C_c^{\infty}(\GL_2(\A_E^{\infty})//U)$ module is at most $1$ for all abelian extensions $k/E$.  If $\ell$ is sufficiently large in a sense depending on $\pi$ then $M_{\et,\iota}^{U,ss}(\pi^{\infty})(d/2)^{\Gal_k}$ is spanned by algebraic cycles for all finite extensions $k/\QQ$.  
\end{thm}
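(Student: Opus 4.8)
The plan is to reduce the theorem to the case $d = 2$ treated by Harder--Langlands--Rapoport \cite{HLR} by a Galois-descent / base-change argument, exploiting the structure of the totally real Galois field $E/\QQ$. First I would decompose the Galois representation $M_{\et,\iota}^{U,ss}(\pi^\infty)$ on the geometric side. Because $\pi$ is nondihedral and is in the lowest weight discrete series at every real place, for each isomorphism $\iota$ the representation $\pi$ gives rise to a compatible system of two-dimensional $\ell$-adic Galois representations $\rho_{\pi,\lambda}$ of $\Gal_E$ (via the construction of Carayol, Taylor, et al.); the $d$-dimensional cohomology $M_{\et,\iota}^U(\pi^\infty)$ over $\bar\QQ$, as a $\Gal_E$-representation, is (up to a Tate twist and the Hecke multiplicity space) the tensor-induced representation $\bigotimes_{v \mid \infty}$ of the conjugates of $\rho_\pi$, i.e. an Asai-type / tensor-induction construction $\As^{E/\QQ}\rho_\pi$ indexed by $\mathrm{Hom}(E,\bar\QQ) \cong \Gal(E/\QQ)$ since $E/\QQ$ is Galois. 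The key point is that the space of Tate classes over a finite extension $k/\QQ$ is the $\Gal_k$-invariants of this tensor-induced representation twisted to weight zero.

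Second, I would analyze these invariants. By Frobenius reciprocity / Mackey theory for tensor induction, the $\Gal_k$-invariants of the tensor-induced representation are governed by the $\Gal_{Ek}$-equivariant structure of $\bigotimes_\sigma \rho_\pi^\sigma$, together with the action of $\Gal(Ek/k)$ permuting the tensor factors. Since $\pi$ is nondihedral, each $\rho_\pi^\sigma$ has large image (its projective image is not dihedral), so the only way the tensor product $\bigotimes_\sigma \rho_\pi^\sigma$ can acquire invariants is through coincidences $\rho_\pi^\sigma \cong \rho_\pi^{\sigma'} \otimes (\text{character})$ among the conjugates --- that is, through the action of $\Gal(E/\QQ)$ on the set of twists of $\pi$. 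The hypothesis that the rank of $M_{\et,\iota}^{U,ss}(\pi^\infty)(d/2)^{\Gal_k}$ as a Hecke module is at most $1$ for all \emph{abelian} $k/E$ is exactly what pins down this combinatorial structure: it forces the subgroup of $\Gal(E/\QQ)$ fixing $\pi$ up to twist, and the associated twisting characters, to be of a restricted shape. From this one extracts that any Tate class is already ``visible'' after restricting to a subfield $E'\subseteq E$ with $[E':\QQ] = 2$ (or is accounted for by a Hecke character, hence by a \emph{known} algebraic cycle on a smaller modular surface), and one reduces the span question to the Hilbert modular \emph{surface} attached to $E'$, where \cite{HLR} applies and produces the cycles (Hirzebruch--Zagier divisors and their generalizations). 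Pulling these cycles back along the finite covering $Y^U \times k \to Y^{U'}\times k$ (or pushing forward along it, using the projection formula to control the Hecke action) produces algebraic cycles spanning the Tate classes on $Y^U$.

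Third, the role of ``$\ell$ sufficiently large depending on $\pi$'' enters in two places, and I would isolate these. One: the image of $\rho_{\pi,\lambda}$ is as large as possible (the residual representation is irreducible and has big image) for all but finitely many $\lambda$, which is needed so that the invariant-theoretic computation of $(\bigotimes_\sigma \rho_\pi^\sigma)^{\Gal_{Ek}}$ over $\bar\QQ_\ell$ agrees with what one expects from the associated motive --- i.e. no ``extra'' invariants appear in small characteristic. Two: semisimplicity of the Hecke action and the splitting off of $M_{\et,\iota}^{U,ss}(\pi^\infty)$ as a direct summand, together with comparison between intersection cohomology of $X^U$ and ordinary cohomology of $Y^U$ in middle degree, are cleanest away from finitely many $\ell$; I would cite the relevant results (Morel, etc.) rather than reprove them.

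The main obstacle I anticipate is the \textbf{descent of algebraicity}, not the computation of the dimension of the Tate space: knowing that a Tate class on $Y^U \times k$ becomes, after restriction to a quadratic subfield's modular surface, a class spanned by algebraic cycles does \emph{not} immediately give algebraicity upstairs, because the pullback/pushforward maps on cohomology need not be Hecke-equivariant in a way that respects the $\pi^\infty$-isotypic decomposition, and one must check that the cycle classes one produces downstairs actually hit the desired isotypic piece upstairs rather than landing in an orthogonal complement. Handling this is precisely where the rank-$\leq 1$ hypothesis is doing real work: it guarantees there is essentially only one Tate class to produce (per Hecke line), so a single well-chosen cycle (a component of a Hirzebruch--Zagier-type correspondence on $Y^U$ itself, constructed by embedding a quaternionic Shimura curve or a smaller Hilbert modular variety) suffices, and one only needs to verify non-vanishing of its projection to $M_{\et,\iota}^{U,ss}(\pi^\infty)(d/2)^{\Gal_k}$ --- a computation that can be done by comparing with the archimedean / Hodge-theoretic picture or with the explicit period computations of \cite{HLR}, \cite{MR}, \cite{RamaHil}.
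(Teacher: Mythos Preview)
Your overall architecture---compute the Tate space via the tensor-induced Galois representation $\bigotimes_\sigma \rho_\pi^\sigma$, use the rank-$\leq 1$ hypothesis to pin down the stabilizer of $\pi$ under twisting by $\Gal(E/\QQ)$, and then produce a cycle from a sub-Shimura-variety---matches the paper's. But the reduction you propose has the index backwards, and this is a genuine gap rather than a detail.

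The rank formula (Proposition~\ref{one-motive}) gives
\[
\left(\binom{m}{m/2}-\binom{m}{m/2-1}\right)^{d/m}
\]
for the rank of the Tate space over large enough $k$, where $m=|\Gal(E/\QQ)_{\rho_{\pi,\iota}}|$ is the order of the stabilizer of $\rho_{\pi,\iota}$ up to character twist. The rank-$\leq 1$ hypothesis forces $m=2$: the stabilizer has order $2$, so its fixed field $K$ satisfies $[E:K]=2$, hence $[K:\QQ]=d/2$. The relevant sub-Shimura-variety $Y_K^{U_K}$ is therefore $(d/2)$-dimensional, not a surface, and there is no reduction to \cite{HLR}. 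The paper instead embeds $Y_K^{U_K}\hookrightarrow Y^U$ directly (this is the Hirzebruch--Zagier cycle in codimension $d/2$) and proves nontriviality of its class in the $\pi$-isotypic piece by a period computation for arbitrary even $d$ (Theorem~\ref{nontrivial}, relying on \cite[Theorem~10.1]{GG}), not by pulling back from a surface.

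Two further ingredients are missing from your sketch. First, knowing $\rho_{\pi,\iota}^\sigma\cong\rho_{\pi,\iota}\otimes\chi$ for $\langle\sigma\rangle=\Gal(E/K)$ does not yet exhibit $\pi$ as (a twist of) a base change from $K$; one needs $\chi=\mu\mu^{-\sigma}$ for some character $\mu$, which the paper extracts from the nondihedral hypothesis via Lapid--Rogawski \cite{LapRog}, and then cyclic base change \cite{Langlands} yields $\pi\otimes\mu\cong\pi_{0E}$ for some cuspidal $\pi_0$ on $\GL_2(\A_K)$. Second, the (twisted) Hirzebruch--Zagier cycle naturally lands in the $\pi_{0E}^\infty$-isotypic piece, not the $\pi^\infty$-piece; to transport it, the paper uses the twisting correspondences $\mathcal{T}_\mu$ of \S\ref{sec-twist} (after Murty--Ramakrishnan \cite{MR}), which are themselves algebraic correspondences defined over an abelian extension of $E$. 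Your final paragraph correctly anticipates that, once the rank is $1$, a single nonvanishing suffices---but that nonvanishing is the content of \cite[Theorem~10.1]{GG}, not a consequence of \cite{HLR}.
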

Here $M_{\et,\iota}^{U, ss}(\pi^{\infty})$
denotes the semisimplification of $M_{\et,\iota}^U(\pi^{\infty})$ as a $\Gal_{\QQ}$-representation.

\begin{remarks} 
\item In the course of proving Theorem \ref{thm-main} we compute $M_{\et, \iota}^{U,ss}(\pi^{\infty})(d/2)^{\Gal_{k}}$ over sufficiently large finite extensions $k/\QQ$ if $\pi$ is not dihedral (see Proposition \ref{one}).  This proposition, and perhaps the main theorem, may be well-known to experts, but they have not appeared in the literature and seem a useful springboard to further investigations of Tate classes on Hilbert modular varieties and more general Shimura varieties.

\item In order to prove our theorem we need to assume that the algebraic envelope of the Galois representation $\rho_{\pi,\iota}$ attached to $\pi$ by Blasius and Rogawski \cite{BlR} and Taylor \cite{T} is large.  We can deduce this from the results of \cite{Dimi} provided that $\ell$ is sufficiently large, and this is the reason for our assumption that $\ell$ is sufficiently large.  If one could strengthen the results of \cite{Dimi} as in \cite{Ribet}, then one could dispense with this assumption.  Alternately, if one knew the automorphy (or perhaps even potential automorphy) of certain tensor product representations  (compare \S \ref{sec-tens-prod}) one might place $\rho_{\pi,\iota}$ in a compatible system and then try to prove that the dimension of the space of Tate classes is independent of $\ell$.  
\end{remarks}

We now outline the contents of this paper.  In the next section we compute the set of one dimensional subrepresentations of tensor products of the standard representation of the algebraic group $\GL_2$.  In \S \ref{sec-alg-env} and \S \ref{sec-tens-prod} we use this result together with some ideas of Serre and Ribet to compute the number of one dimensional subrepresentations of certain tensor products of rank two Galois representations.  We set notation for Asai representations in \S \ref{sec-Asai} and state how the \'etale cohomology of Hilbert modular varieties is described as a Galois representation in terms of these Asai representations in \S \ref{coho}.  We recall Hirzebruch-Zagier cycles and their twists in \S \ref{sec-twist} and \S \ref{sec-HZ}
and also recall how their cohomological nontriviality is linked to Asai $L$-functions.  Finally in \S \ref{sec-main} we put these pieces together to prove Theorem \ref{main}, which is  a restatement of Theorem \ref{thm-main}.

\section{Counting one dimensional constituents of representations of $\GL_2$} \label{sec-alg-reps}

Let $R:\GL_{2} \to \GL_{2}$ denote the standard representation, where we regard $\GL_2$ as an algebraic group over a characteristic zero field.
Each irreducible representation of $\GL_{2}$ is isomorphic to $\Sym^kR\otimes (\wedge^2R)^{\otimes m}$ for some nonnegative integer $k$ and some integer $m$.  This is the representation of highest weight $(k,m)$. In this representation the diagonal torus $\begin{pmatrix}a &0\\0& a^{-1}b\end{pmatrix}$ acts with weights 
\begin{equation}\label{km}
a^kb^m, a^{k-2}b^{1+m}, \dots, a^{-k}b^{k+m}.
\end{equation}

\begin{lem} \label{even}
For any $n \in \ZZ_{> 0}$ the number of one dimensional subrepresentations of $\otimes_{i=1}^{2n}R$ is equal to
\begin{equation}
\binom{2n}{n}-\binom{2n}{ n-1}.
\end{equation}
Each one dimensional subrepresentation is isomorphic to the irreducible representation of highest weight $(0,n)$, namely $\det(R)^{n}$.
\end{lem}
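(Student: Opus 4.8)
The plan is to decompose $\otimes_{i=1}^{2n} R$ into irreducibles using $\SL_2$-theory together with the central/determinant character, and to isolate which irreducibles are one-dimensional. First I would observe that $R \cong R^\vee \otimes \det(R)$, so $\otimes_{i=1}^{2n} R \cong \left(\otimes_{i=1}^{2n}R^\vee\right)\otimes \det(R)^{2n}$; more usefully, I track the action of the center: the scalar matrix $\lambda I$ acts on $R^{\otimes 2n}$ by $\lambda^{2n}$, hence on every irreducible constituent $\Sym^k R \otimes (\wedge^2 R)^{\otimes m}$ by $\lambda^{k+2m}$, forcing $k + 2m = 2n$ for every constituent. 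A one-dimensional subrepresentation must be $\Sym^0 R \otimes (\wedge^2 R)^{\otimes m} = \det(R)^m$, i.e. $k=0$, and then the central constraint gives $2m = 2n$, so $m = n$. This already proves the second sentence of the lemma: every one-dimensional subrepresentation is isomorphic to $\det(R)^n$, and in particular they are all isomorphic to each other, so the "number of one-dimensional subrepresentations" equals $\dim \operatorname{Hom}_{\GL_2}(\det(R)^n, R^{\otimes 2n})$, the multiplicity of the trivial $\SL_2$-representation appearing with the correct central character (which is automatic here).

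Next I would compute that multiplicity. Restricting to $\SL_2$ (where $R$ becomes the standard $2$-dimensional representation $V$ and $\det(R)$ becomes trivial), the multiplicity of $\det(R)^n$ in $R^{\otimes 2n}$ as a $\GL_2$-representation equals the multiplicity of the trivial representation in $V^{\otimes 2n}$ as an $\SL_2$-representation. This is a classical Catalan-type count: decomposing $V^{\otimes m}$ into irreducibles $\Sym^j V$, the multiplicity of $\Sym^j V$ in $V^{\otimes 2n}$ is $\binom{2n}{n-j} - \binom{2n}{n-j-1}$ (the ballot numbers), and setting $j = 0$ gives exactly $\binom{2n}{n} - \binom{2n}{n-1}$. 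I would justify the ballot-number formula either by the standard generating-function / reflection argument, or more self-containedly by induction on $n$ using the Clebsch–Gordan rule $\Sym^j V \otimes V \cong \Sym^{j+1}V \oplus \Sym^{j-1}V$: writing $c_{n,j}$ for the multiplicity of $\Sym^j V$ in $V^{\otimes 2n}$ (so we also need the odd tensor powers $V^{\otimes 2n+1}$ as an intermediate step), the recursion $c_{n,j}$ in terms of $c_{n-1,\cdot}$ matches the recursion satisfied by $\binom{2n}{n-j}-\binom{2n}{n-j-1}$ with the same initial data.

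Finally I would assemble the two halves: by the central-character argument the only candidate is $\det(R)^n$, and by the $\SL_2$ computation it occurs with multiplicity $\binom{2n}{n}-\binom{2n}{n-1}$; since a semisimple representation's isotypic multiplicity equals the dimension of the space of subrepresentations isomorphic to a given irreducible, this is precisely the number of one-dimensional subrepresentations. I expect the only real work — though it is routine — to be the clean bookkeeping of the Clebsch–Gordan recursion (keeping track of both even and odd tensor powers simultaneously) or, if one prefers, citing the reflection principle for the ballot numbers; the representation-theoretic reduction itself is immediate.
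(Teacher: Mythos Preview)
Your proof is correct. Both your argument and the paper's are character computations that identify the multiplicity with the Catalan number $\binom{2n}{n}-\binom{2n}{n-1}$, but the packaging differs. The paper works directly with $\GL_2$-weights: it expands the character of $R^{\otimes 2n}$ as $(a+a^{-1}b)^{2n}$, reads off that the only weight of the form $b^k$ is $b^n$ (occurring $\binom{2n}{n}$ times), and then notes that the weight $a^{-2}b^{n+1}$---which has multiplicity $\binom{2n}{n+1}=\binom{2n}{n-1}$ and appears exactly once in each irreducible constituent of highest weight $(2n-2i,i)$ with $i<n$ but not in $\det(R)^n$---counts those other constituents, so subtracting gives the answer. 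You instead factor the problem into a central-character constraint (forcing $m=n$) followed by an $\SL_2$-multiplicity computation via Clebsch--Gordan or the ballot-number formula. Your route is a bit more conceptual and ties the count explicitly to standard Catalan combinatorics; the paper's is a self-contained weight count that avoids any induction or external citation. Either is fine here.
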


\begin{proof}

Consider the binomial expansion
\begin{align} 
(a+a^{-1}b)^{2n}
=&\binom{2n}{0}a^{2n}+\binom{2n}{1}a^{2n-2}b+\cdots+\binom{2n}{n-1}a^{2}b^{n-1}+\binom{2n}{n}b^n\nonumber\\
&+\binom{2n}{n+1}a^{-2}b^{n+1}+\cdots+\binom{2n}{2n}a^{-2n}b^{2n}.\label{remaining}
\end{align}
With a little thought, one sees that the weights that occur in $R^{\otimes 2n}$ are precisely the weights in this binomial expansion, each occurring with multiplicity equal to the given binomial coefficients.   
In particular, the highest weights that occur are 
\begin{equation}\label{list}
(2n, 0), (2n-2, 1), \dots, (2, n-1), (0,n)
\end{equation}
and, as recalled above, the diagonal torus acts with weights
\begin{equation}\label{ith}
a^{2n-2i}b^{i}, a^{2n-2(i+1)}b^{1+i}, \dots, b^n, \dots,  a^{-2n+2i}b^{2n+i}
\end{equation}
in each representation of highest weight $(2n-2i,i)$.  In particular, we see that any one dimensional subrepresentation is the representation of highest weight $(0,n)$, corresponding to $\det(R)^n$.  
We are left with computing how many times the highest weight $(0,n)$ occurs.  

First observe from \eqref{remaining} that the weight $b^n$ appears $\binom{2n}{n}$ times in total. Second observe from \eqref{ith} that it appears in each highest weight of the list \eqref{list}. Note that the weights appearing the second half of binomial expansion in \eqref{remaining} have already occurred in all the subrepresentations of highest weight not equal to $(0, n)$, namely $(2n, 0), (2n-2, 1), \dots, (2, n-1)$. Therefore by \eqref{ith} the number of copies of the weight $b^n$ appearing only in the highest weight $(0,n)$ is
$$
\binom{2n}{n}-\binom{2n}{n-1}.
$$
\end{proof}

\begin{lem}\label{odd}
For any $n \in \ZZ_{\geq 0}$ there are no one dimensional subrepresentations of $\otimes_{i=1}^{2n+1}R$.
\end{lem}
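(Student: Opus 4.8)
The plan is to run the same weight-counting argument as in Lemma \ref{even}, but this time observe that the relevant ``central'' weight never appears. Recall that a one dimensional subrepresentation of $\GL_2$ must be $\det(R)^{m}$ for some integer $m$, since the only irreducible representation of highest weight $(k,m)$ that is one dimensional is the one with $k=0$. The diagonal torus $\begin{pmatrix}a&0\\0&a^{-1}b\end{pmatrix}$ acts on $\det(R)^{m}$ by the single character $b^{m}$, in particular by a character that is \emph{independent of $a$}. So it suffices to show that no weight of the form $a^{0}b^{m}$ occurs in $R^{\otimes(2n+1)}$ at all; then certainly no one dimensional subrepresentation can occur.

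First I would record, exactly as in \eqref{remaining}, that the weights occurring in $R^{\otimes N}$ are precisely the monomials appearing in the binomial expansion of $(a+a^{-1}b)^{N}$, namely $a^{N}b^{0}, a^{N-2}b^{1}, a^{N-4}b^{2}, \dots, a^{-N}b^{N}$, with the $i$-th of these being $a^{N-2i}b^{i}$ for $0 \le i \le N$ and occurring with multiplicity $\binom{N}{i}$. Now take $N = 2n+1$. The exponent of $a$ in the $i$-th weight is $N - 2i = 2n+1-2i$, which is odd for every $i$, hence never zero. Therefore the weight $b^{m}$ (i.e.\ $a^{0}b^{m}$) does not occur in $R^{\otimes(2n+1)}$ for any $m$.

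Since any one dimensional subrepresentation of $R^{\otimes(2n+1)}$ would be isomorphic to $\det(R)^{m}$ for some $m$, and the torus acts on such a subrepresentation through the weight $a^{0}b^{m}$, the absence of this weight in $R^{\otimes(2n+1)}$ forces the absence of any such subrepresentation. This completes the proof. There is really no obstacle here: the only point to be careful about is the parity bookkeeping — that the $a$-exponent $N-2i$ has the same parity as $N$ for all $i$ — which is immediate, and the observation that a one dimensional representation has $a$-weight exponent $0$, which follows from the weight description \eqref{km} with $k=0$.
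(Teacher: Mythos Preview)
Your proof is correct and follows essentially the same approach as the paper: both observe that the weights of $R^{\otimes(2n+1)}$ are the monomials in the expansion of $(a+a^{-1}b)^{2n+1}$, none of which has $a$-exponent zero, so no weight of the form $b^m$ occurs and hence no one dimensional subrepresentation exists. Your write-up is slightly more explicit about the parity reasoning and about why a one dimensional subrepresentation must have $a$-weight zero, but the argument is the same.
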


\begin{proof}
As in the proof of Lemma \ref{even}, the weights that occur in $R^{\otimes (2n+1)}$ are precisely the weights occurring in the binomial expansion 
\begin{align} 
(a+a^{-1}b)^{2n+1}
=&\binom{2n+1}{0}a^{2n+1}+\binom{2n+1}{1}a^{2n-1}b+\cdots+\binom{2n+1}{n}ab^{n}\nonumber\\
&+\binom{2n+1}{n+1}a^{-1}b^{n+1}+\cdots+\binom{2n+1}{2n+1}a^{-(2n+1)}b^{2n+1}.
\end{align}
Note that $R^{\otimes(2n+1)}$ does not have weights of the form $b^k$ for any $k$. This implies that $R^{\otimes(2n+1)}$ does not have one dimensional subrepresentations.
\end{proof}

\section{Algebraic envelopes of rank two Galois representations} \label{sec-alg-env}

Let $F$ be a number field, let $\ell$ be a rational prime and let $\bar{\QQ}_{\ell}$ be a choice of algebraic closure of $\QQ_{\ell}$.  Let $G$ be a $\bar{\QQ}_{\ell}$-algebraic group.
Recall that the \textbf{algebraic envelope} of an abstract group $A \leq G(\bar{\QQ}_{\ell})$ is the smallest algebraic subgroup of $G$ whose $\bar{\QQ}_{\ell}$-points contain $A$.  Similarly
the algebraic envelope of a representation $\rho:\Gal_F \to \GL_n(\bar{\QQ}_{\ell})$ is the algebraic envelope of $\rho(\Gal_F)$.  Here as above $\Gal_F$ is the absolute Galois group of $F$; all representations of this group will be assumed to be continuous.  

For a representation $\rho:\Gal_F \to \GL_n(\bar{\QQ}_{\ell})$ we denote by $\bar{\rho} : \Gal_F\to \GL_n(\bar{\FF}_{\ell})$ its reduction modulo $\ell$; it depends on the choice of a $\Gal_F$-stable $\bar{\ZZ}_{\ell}$-lattice (but only up to semi-simplification).  

\begin{lem} \label{lem-sl2} 
Suppose that $\bar{\rho}(\Gal_F)$ contains $\SL_2(\mathbb{F}_{\ell})$ for some $\ell\geq5$.  Then the algebraic envelope of $\rho$ contains $\SL_2$.
\end{lem}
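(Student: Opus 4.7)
Let $G \subseteq \GL_2$ denote the algebraic envelope of $\rho$, with identity component $G^0$. The only connected algebraic subgroups of $\GL_2$ that contain $\SL_2$ are $\SL_2$ and $\GL_2$ itself, so to prove the lemma it suffices to rule out the possibility that $G^0 \not\supseteq \SL_2$. In that case $G^0$ would be a proper connected subgroup of $\GL_2$; every such subgroup is solvable (the connected reductive subgroups of $\GL_2$ are precisely the tori, $\SL_2$, and $\GL_2$), and by Lie--Kolchin $G^0$ would lie in some Borel subgroup of $\GL_2$. My plan is to argue by cases on $G$ itself (not just on $G^0$) and derive a contradiction from the largeness of $\bar{\rho}(\Gal_F)$.

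Analyzing the $\GL_2$-normalizer of each possibility for a connected solvable $G^0$ shows that $G$ must fall into one of three mutually exclusive cases: (a) $G$ is contained in a Borel subgroup $B$ of $\GL_2$; (b) $G$ is contained in $N(T)$ for some maximal torus $T$ without being contained in $T$; or (c) $G^0 = 1$, so that $G$ is finite. In case (a), $\rho$ is reducible over $\bar{\QQ}_\ell$, so the semisimplification of $\bar{\rho}$ is reducible, contradicting the irreducibility of the standard action of $\SL_2(\FF_\ell) \subseteq \bar{\rho}(\Gal_F)$ on $\FF_\ell^2$. In case (b), $\rho^{-1}(T(\bar{\QQ}_\ell)) = \Gal_{F'}$ for some quadratic extension $F'/F$, and $\rho|_{\Gal_{F'}}$ has image contained in a torus, hence abelian, as is $\bar{\rho}|_{\Gal_{F'}}$. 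But $\bar{\rho}(\Gal_{F'})$ has index at most $2$ in $\bar{\rho}(\Gal_F)$, so it contains the commutator subgroup of $\bar{\rho}(\Gal_F)$; since $\SL_2(\FF_\ell)$ is perfect for $\ell \geq 5$, this forces $\SL_2(\FF_\ell) \subseteq \bar{\rho}(\Gal_{F'})$, contradicting abelianness.

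Case (c) is the main obstacle, since a finite $G$ need not lie in any Borel or torus-normalizer. Here one appeals to the classical classification of finite subgroups of $\PGL_2(\bar{\QQ}_\ell)$ (cyclic, dihedral, $A_4$, $S_4$, or $A_5$), which bounds the projective order of $G$ by $60$. On the other hand, the projective image of $\bar{\rho}(\Gal_F)$ contains $\mathrm{PSL}_2(\FF_\ell)$, of order $\ell(\ell^2-1)/2$, which exceeds $60$ as soon as $\ell \geq 7$; comparing these bounds forces a contradiction in case (c). Ruling out all three cases yields $G^0 \supseteq \SL_2$, and hence $G \supseteq \SL_2$, as desired.

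The hard part of the plan is really the bookkeeping in the solvable case: one must carefully check that no connected solvable $G^0 \subseteq \GL_2$ has a normalizer bigger than $B$ or $N(T)$, and that the exceptional finite image case is indeed handled by the order estimate. Once the case analysis (a)--(c) is in place, each contradiction uses only one classical "bigness" property of $\SL_2(\FF_\ell)$ for $\ell \geq 5$: irreducibility, perfectness, or order of its projective image.
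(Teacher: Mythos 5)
Your route is genuinely different from the paper's: the paper quotes Ribet's Theorem 2.1 to promote the mod-$\ell$ hypothesis to the $\ell$-adic statement $\rho(\Gal_F)\supseteq \SL_2(\ZZ_{\ell})$ and then passes to Lie algebras, whereas you classify the proper algebraic subgroups of $\GL_2$ and rule each out by a group-theoretic ``bigness'' property of $\SL_2(\FF_{\ell})$. Cases (a) and (b) are correct (reducibility of $\rho$ forces reducibility of $\bar{\rho}^{ss}$, contradicting irreducibility of the standard $\SL_2(\FF_{\ell})$-module; and an index-two subgroup contains the commutator subgroup, so perfectness of $\SL_2(\FF_{\ell})$ for $\ell\geq 5$ kills the normalizer-of-torus case). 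Two points of bookkeeping: your case (c) should be ``$G^0$ is contained in the scalars'' rather than ``$G^0=1$'', since $G$ can be infinite with finite projective image (the same argument then applies to $P(G)$); and the classification of finite subgroups of $\PGL_2(\bar{\QQ}_{\ell})$ does \emph{not} bound their order by $60$ --- cyclic and dihedral subgroups are unbounded. What it does give is that $A_5$ is the only nonsolvable possibility, hence the only nonabelian simple composition factor of a finite subgroup of $\GL_2(\bar{\QQ}_{\ell})$ is $A_5$; since $\bar{\rho}(\Gal_F)$ is a quotient of $\rho(\Gal_F)$ and contains $\SL_2(\FF_{\ell})$, a finite image would force $\mathrm{PSL}_2(\FF_{\ell})$ to be a composition factor. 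Rephrased this way, your case (c) closes for $\ell\geq 7$, where $|\mathrm{PSL}_2(\FF_{\ell})|>60$.

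The genuine gap is that case (c) is left open at $\ell=5$, which the lemma claims to cover, and this is not a repairable detail of your method: $\mathrm{PSL}_2(\FF_5)\cong A_5$, and the obstruction is realized. The binary icosahedral group $\SL_2(\FF_5)$ embeds in $\GL_2(\bar{\QQ}_5)$ via its two-dimensional complex representation (defined over $\QQ_5(\sqrt{5})$), and the reduction of that representation modulo the maximal ideal is the standard representation of $\SL_2(\FF_5)$: the reduction kernel is pro-$5$ and $\SL_2(\FF_5)$ has no nontrivial normal $5$-subgroup, so the reduction is faithful and irreducible, and the standard representation is the unique irreducible two-dimensional one in characteristic $5$. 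Taking any number field $F$ with a surjection $\Gal_F\twoheadrightarrow\SL_2(\FF_5)$ therefore produces a $\rho$ satisfying the hypothesis at $\ell=5$ whose algebraic envelope is finite. So no order or simplicity comparison can close case (c) at $\ell=5$; one needs an $\ell$-adic lifting input of the Ribet--Serre type (which is exactly what the paper's proof supplies, and note that Ribet's Theorem 2.1 concerns closed subgroups of $\SL_2(\ZZ_{\ell})$, so the same $\ell=5$, ramified-coefficient issue deserves attention there too). As written, your argument proves the lemma only for $\ell\geq 7$; you should either add that restriction or handle the finite-image case by a different mechanism.
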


\begin{proof}   Let $\ell\geq 5$ be such that $\bar{\rho}(\Gal_F)$ contains $\SL_2(\FF_{\ell})$.  It then follows from \cite[Theorem 2.1]{Ribet} that $\rho(\Gal_F)$ contains $\SL_2(\ZZ_{\ell})$.  Thus the Lie algebra of $\SL_2(\ZZ_{\ell})$ is contained in the Lie algebra of the $\bar{\QQ}_{\ell}$-points of the algebraic envelope of $\rho$
which implies by dimension considerations that the algebraic envelope of $\rho$ contains $\SL_2$.  \end{proof}

Let
\begin{align} \label{rho-i}
\rho_{ i}: \Gal_F \longrightarrow \GL_2(\bar\QQ_{\ell}),\quad 1\leq i\leq n,
\end{align}
be a set of Galois representations.  
One has the tensor product
\begin{align} \label{tens-prod}
\otimes_{i=1}^{n}\rho_i: \Gal_F \longrightarrow \Aut\big(\otimes_{i=1}^{n}V_{i}\big)(\bar{\QQ}_{\ell}),
\end{align}
where $V_{i}=\QQ^2$ and we view $\Aut\big(\otimes_{i=1}^{n}V_{ i}\big)$ as an algebraic group over $\QQ$ (or by base change as an algebraic group over $\bar{\QQ}_{\ell}$).

One has a natural homomorphism 
\begin{equation}\label{phi}
\phi: \Gal_F \longrightarrow \prod_{i=1}^{n}\Aut(V_i)(\bar{\QQ}_{\ell})
\end{equation}
such that  $\otimes_{i=1}^{n} \rho_{ i}$ is the composite
\begin{align}
\Gal_F \lto \prod_{i=1}^{n}\Aut(V_i)(\bar{\QQ}_{\ell})  \lto \Aut\big(\otimes_{i=1}^{n}V_i\big)(\bar{\QQ}_{\ell}),
\end{align}
where the first map is $\phi$ and the second is the tensor product.  Here we are viewing $\mathrm{Aut}(V_i)$ for each $i$ as an algebraic group over $\QQ$.

\begin{lem}\label{image_phi}
Suppose that for each $j$ the group $\bar{\rho}_{ j}(\Gal_F)$ contains $\SL_2(\FF_{\ell})$ and that for each $i \neq j$ there is no character $\chi:\Gal_F \to \GL_1(\bar{\QQ}_{\ell})$ such that
$$
\rho_{ i}\cong \rho_{ j} \otimes \chi.
$$
  Then the algebraic envelope of $\phi(\Gal_F)$ contains
$\prod_{i=1}^{n}\SL_2$.
\end{lem}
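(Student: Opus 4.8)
The plan is to work with the algebraic envelope $H$ of $\phi(\Gal_F)$, i.e.\ the Zariski closure of $\phi(\Gal_F)$ in $\prod_{i=1}^n\Aut(V_i)\cong\prod_{i=1}^n\GL_2$, and to prove that $\Lie H$ contains $\prod_{i=1}^n\mathfrak{sl}_2$. This suffices: $H\cap\prod_{i=1}^n\SL_2$ is then an algebraic subgroup of the connected group $\prod_{i=1}^n\SL_2$ whose Lie algebra is all of $\prod_{i=1}^n\mathfrak{sl}_2$, hence it equals $\prod_{i=1}^n\SL_2$, so $H\supseteq\prod_{i=1}^n\SL_2$. Set $\mathfrak h=\Lie H$ and $\mathfrak h'=[\mathfrak h,\mathfrak h]$. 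Since $[\mathfrak{gl}_2,\mathfrak{gl}_2]=\mathfrak{sl}_2$ we get $\mathfrak h'\subseteq\prod_{i=1}^n\mathfrak{sl}_2$, and since the projection $p_i$ of $H$ to the $i$-th factor is the algebraic envelope of $\rho_i=p_i\circ\phi$, which contains $\SL_2$ by Lemma \ref{lem-sl2}, the algebra $p_i(\mathfrak h)$ contains $\mathfrak{sl}_2$ and therefore $p_i(\mathfrak h')=\mathfrak{sl}_2$ for every $i$. Thus $\mathfrak h'$ is a Lie subalgebra of $\prod_{i=1}^n\mathfrak{sl}_2$ surjecting onto every factor. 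By the Goursat-type structure lemma for a product of copies of a simple Lie algebra (an easy induction on $n$, used in the same way in \cite{Ribet}), such a subalgebra is a direct sum of ``diagonal'' copies of $\mathfrak{sl}_2$ indexed by a partition of $\{1,\dots,n\}$; in particular it is all of $\prod_{i=1}^n\mathfrak{sl}_2$ as soon as, for every pair $i\neq j$, its image in the $(i,j)$-factor $\mathfrak{sl}_2\times\mathfrak{sl}_2$ is everything. So the whole statement reduces to the case $n=2$, and the hypotheses of the lemma plainly restrict to any pair of indices.

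For $n=2$ write $\rho=\rho_i$, $\rho'=\rho_j$, let $H_2$ be the algebraic envelope of $(\rho,\rho')(\Gal_F)$ in $\GL_2\times\GL_2$, and $\mathfrak h_2'=[\Lie H_2,\Lie H_2]\subseteq\mathfrak{sl}_2\times\mathfrak{sl}_2$, which surjects onto both factors. A Lie subalgebra of $\mathfrak{sl}_2\times\mathfrak{sl}_2$ surjecting onto both factors is either the whole thing or, since $\mathfrak{sl}_2$ is simple, the graph of an automorphism of $\mathfrak{sl}_2$; as $\Aut(\mathfrak{sl}_2)(\bar\QQ_\ell)=\PGL_2(\bar\QQ_\ell)$ consists of conjugations, the second case reads $\mathfrak h_2'=\{(X,gXg^{-1}):X\in\mathfrak{sl}_2\}$ for some $g\in\GL_2(\bar\QQ_\ell)$. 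I will rule this out. In characteristic zero the derived group $\mathcal D(H_2^{\circ})$ is connected with Lie algebra $\mathfrak h_2'$, and a connected algebraic subgroup is determined by its Lie algebra, so $\mathcal D(H_2^{\circ})=S:=\{(h,ghg^{-1}):h\in\SL_2\}$. Now $H_2$ normalises $S$, the derived group of the identity component being characteristic in $H_2^{\circ}\trianglelefteq H_2$, so $H_2$ stabilises each $S$-isotypic subspace of the representation $\mathrm{Hom}(V',V)\cong V\otimes(V')^{\vee}$. Restricted to $S\cong\SL_2$ this representation is $\mathrm{std}\otimes\mathrm{std}^{\vee}\cong\Sym^2\,\mathrm{std}\oplus\one$, so its $S$-invariant part is the single line $L=\bar\QQ_\ell\,g^{-1}\subseteq\mathrm{Hom}(V',V)$. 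Hence $L$ is $H_2$-stable, and therefore $\Gal_F$-stable because $(\rho,\rho')(\Gal_F)$ is Zariski dense in $H_2$; say $\Gal_F$ acts on $L$ through a character $\chi$. A nonzero element of $L$ is then a nonzero homomorphism $V'\to V$ intertwining $\rho'\otimes\chi$ with $\rho$. But $\bar\rho$ and $\bar\rho'$ have image containing $\SL_2(\FF_\ell)$, which acts irreducibly, so $\rho$ and $\rho'$ are irreducible, and Schur's lemma forces $\rho\cong\rho'\otimes\chi$, contradicting the hypothesis. Hence $\mathfrak h_2'=\mathfrak{sl}_2\times\mathfrak{sl}_2$, which finishes the case $n=2$ and with it the proof.

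I expect the only genuinely substantive step to be this $n=2$ argument, and within it the passage from ``$\mathfrak h_2'$ is the graph of a conjugation'' to ``$\rho$ and $\rho'$ agree after a character twist'' — producing the $\Gal_F$-stable line $L$ via the normaliser argument and then invoking Schur. The reduction from general $n$ to pairs via the Goursat structure lemma, and the bookkeeping relating $H$, its identity component, its derived group, the corresponding Lie algebras, and the representations they act on — using in an essential way that we are in characteristic zero (so that connected algebraic subgroups are pinned down by their Lie algebras, surjections of algebraic groups induce surjections on Lie algebras, and $\Lie$ carries commutators to Lie brackets) and that $\prod_{i=1}^n\SL_2$ is connected — is routine but should be written out carefully. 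One small point to flag: Lemma \ref{lem-sl2}, which supplies $\SL_2$ in each factor of the algebraic envelope, requires $\ell\geq 5$, so this should be part of the hypotheses here (as it presumably is in all intended applications).
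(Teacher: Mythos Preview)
Your proof is correct and follows essentially the same strategy as the paper's: reduce to pairs via a Goursat-type argument, show each pairwise projection contains $\SL_2\times\SL_2$ by ruling out the graph case, and then pass from pairs to the full product via Ribet's \cite[Lemma 3.3]{Ribet}. The paper works at the group level and simply invokes Goursat together with the fact that all algebraic automorphisms of $\SL_2$ are inner to rule out the graph, whereas you work at the Lie algebra level and spell out explicitly the normaliser/invariant-line argument that produces the forbidden character twist; your version is more self-contained at that step but otherwise parallel. Your observation that the hypothesis $\ell\geq 5$ is implicitly needed (via Lemma~\ref{lem-sl2}) is correct and worth noting.
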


\begin{proof}
Let $H$ be the algebraic envelope of $\phi(\Gal_F)$ inside $\prod_{k=1}^{n}\mathrm{Aut}(V_k)$.  
For each $i \neq j$ let
\begin{align}\label{proj}
P_{i j} : \prod_{k=1}^{n}\Aut(V_k) \longrightarrow \Aut(V_i)\times \Aut(V_j) \nonumber
\end{align}
be the natural projections.  By Lemma \ref{lem-sl2} the projection of $P_{ij}(H)$ to either $\mathrm{Aut}(V_i)$ or $\mathrm{Aut}(V_j)$ contains $\SL_2$.  By Goursat's lemma (in the context of algebraic groups or Lie algebras, compare \cite[Lemma 3.2]{Ribet} and \cite[Exercise 1.4.8]{Bourbaki}) and our assumption on the $\rho_{ i}$ together with the fact that all (algebraic) automorphisms of $\SL_2$ are inner we conclude that the projection $P_{ij}(H)$ contains $\SL_2 \times \SL_2$.   Note that $\SL_2(\bar{\QQ}_{\ell})$ is equal to its commutator subgroup as an abstract group.  With this in mind, in spite of the fact that $\SL_2(\bar{\QQ}_{\ell})$ is infinite, 
 the proof of \cite[Lemma 3.3]{Ribet} is still valid in the present context and allows us to conclude the proof of the lemma.
 
 \end{proof}

\section{Tensor products of Galois conjugates} \label{sec-tens-prod}

Let $E/F$ be a Galois extension of number fields and let $\rho:\Gal_E \to \GL_2(\bar{\QQ}_{\ell})$ be a Galois representation.
We now investigate the number of one dimensional subrepresentations of the tensor product $\otimes_{\zeta\in\Gal(E/F)} \rho^{\zeta}$.

For fixed $\ell$, define an equivalence relation $\sim$  on the set of  $\ell$-adic Galois representations $\rho:\Gal_E \to \GL_2(\bar{\QQ}_{\ell})$ by $\rho_{ 1}\sim \rho_{ 2}$ if $P\rho_{ 1}\cong P\rho_{ 2}$ as projective representations, where $P\rho_{ i}$ is the composite
$$
\begin{CD}
P\rho_{ i}: \Gal_E@> {\rho_{i}}>>\GL_2(\bar{\QQ}_{\ell})@> {P}>>\PGL_2(\bar{\QQ}_{\ell})
\end{CD}
$$
with $P$ the natural projection.
Note that $\Gal (E/F)$ acts on the set of equivalence classes. For  each $\rho : \Gal_E \longrightarrow \GL_2(\bar\QQ_{\ell})$, let $\Gal(E/F)_{\rho}$ denote the stabilizer of the equivalence class of $\rho$.
Thus there are characters $\chi(\xi):\Gal_E \to \GL_1(\bar{\QQ}_{\ell})$ indexed by $\xi \in \Gal(E/F)_{\rho}$ such that
$$
\rho^{\xi} \cong \rho \otimes \chi(\xi).
$$
Thus
\begin{equation}\label{stable}
\bigotimes_{\xi \in \Gal(E/F)_{\rho}}\rho^{\xi} \cong  \rho^{\otimes |\Gal(E/F)_{\rho}|}\prod_{\xi \in \Gal(E/F)_{\rho}}\chi(\xi).
\end{equation}

\begin{prop}\label{one}
Suppose that the algebraic envelope of $\rho$ contains
$\SL_2$.
Let 
$$
m=|\Gal(E/F)_{\rho}|.
$$
Then for any finite extension $k/E$ the number of one dimensional subrepresentations of $$\otimes_{\zeta\in\Gal(E/F)}\rho^{\zeta}|_{\Gal_k}$$ is zero if 
$m$ is odd and otherwise is equal to 
$$
\left(\binom{m}{m/2}-\binom{m}{m/2-1}\right)^{[E:F]/m}.
$$
In the latter case each one dimensional subrepresentation is isomorphic to 
\begin{align*}
\prod_{\mu \in \Gal(E/F)/\Gal(E/F)_{\rho}}\left(\det(\rho)^{m/2}\prod_{\xi \in \Gal(E/F)_{\rho}}\chi(\xi)\right)^{\mu}\Bigg|_{\Gal_k}.
\end{align*}
\end{prop}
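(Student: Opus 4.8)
The plan is to reduce the computation for $\bigotimes_{\zeta \in \Gal(E/F)} \rho^\zeta$ to the "stable" piece $\bigotimes_{\xi \in \Gal(E/F)_\rho} \rho^\xi$ handled by \eqref{stable}, and then to apply Lemma \ref{even} and Lemma \ref{odd} through the algebraic envelope. First I would choose coset representatives $\mu_1, \dots, \mu_r$ for $\Gal(E/F)/\Gal(E/F)_\rho$, where $r = [E:F]/m$, so that
\begin{align*}
\bigotimes_{\zeta \in \Gal(E/F)} \rho^\zeta \;\cong\; \bigotimes_{j=1}^{r} \left(\bigotimes_{\xi \in \Gal(E/F)_\rho} \rho^\xi\right)^{\mu_j} \;\cong\; \bigotimes_{j=1}^{r} \left(\rho^{\otimes m} \prod_{\xi \in \Gal(E/F)_\rho} \chi(\xi)\right)^{\mu_j},
\end{align*}
using \eqref{stable} and the fact that $(-)^{\mu_j}$ is an automorphism of the category of representations, hence commutes with tensor products and preserves isomorphism classes. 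Thus the whole tensor product is a twist of $\rho^{\otimes rm} = \rho^{\otimes [E:F]}$ by a character, and its one dimensional subrepresentations are exactly those of $\bigotimes_{j=1}^r (\rho^{\otimes m})^{\mu_j}$ tensored with the fixed character $\prod_j (\prod_\xi \chi(\xi))^{\mu_j}$.

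Next I would count one dimensional subrepresentations of $\bigotimes_{j=1}^{r} (\rho^{\otimes m})^{\mu_j}$ restricted to $\Gal_k$. The key input is that the algebraic envelope $G$ of $\rho$ contains $\SL_2$; passing to $\Gal_k$ shrinks the image only by a finite-index subgroup, and since $\SL_2$ is connected its Zariski closure is unaffected, so the algebraic envelope of $\rho|_{\Gal_k}$ still contains $\SL_2$ (this is exactly the reasoning in Lemma \ref{lem-sl2}). A one dimensional subrepresentation of a representation of $\Gal_k$ whose algebraic envelope contains $H \supseteq \SL_2$ is the same as an $H$-subrepresentation that is one dimensional, because any Galois-stable line is automatically stable under the Zariski closure of the image; conversely an $H$-stable line is $\Gal_k$-stable. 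Restricting further to $\SL_2 \hookrightarrow G$ acting through $R$ (the standard representation, since $\Lie G \supseteq \mathfrak{sl}_2$ acts on $V \cong \QQ^2$ irreducibly via $R$), the conjugates $\rho^{\mu_j}$ all restrict to $\SL_2$ as $R$ up to the relevant twist, and a character is trivial on $\SL_2$; hence one dimensional $\Gal_k$-subrepresentations of $\bigotimes_{j=1}^r (\rho^{\otimes m})^{\mu_j}$ correspond to $\SL_2$-invariant lines in $R^{\otimes rm}$, equivalently to one dimensional subrepresentations of $R^{\otimes rm}$ as a $\GL_2$-representation (since $\det(R)$ is trivial on $\SL_2$, the count of one dimensional subspaces fixed by $\SL_2$ equals the multiplicity of the trivial $\SL_2$-representation, which by Lemma \ref{even} and Lemma \ref{odd} is $0$ when $m$ is odd and $\binom{m}{m/2} - \binom{m}{m/2-1}$ raised to... ). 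Here I must be careful: the $\SL_2$ inside $G$ is a single copy mapping diagonally, so I should instead invoke Lemma \ref{image_phi}-style reasoning — but actually for this proposition only the single $\SL_2$ in the envelope of $\rho$ is assumed, so the $r$ factors are \emph{not} independent; rather $\bigotimes_{j=1}^r (\rho^{\otimes m})^{\mu_j}|_{\SL_2} \cong (R^{\otimes m})^{\otimes r} = R^{\otimes rm}$ with $\SL_2$ acting diagonally. So the multiplicity of the trivial representation in $R^{\otimes rm}|_{\SL_2}$ is what Lemma \ref{even} computes only when $r=1$; for general $r$ I need the $\SL_2$-invariants of $R^{\otimes rm}$, and the answer $\left(\binom{m}{m/2}-\binom{m}{m/2-1}\right)^r$ in the proposition strongly suggests the $r$ factors \emph{are} being treated independently — which means the genuinely correct statement is that the algebraic envelope of $\phi_{\mu}: \Gal_k \to \prod_{j=1}^r \Aut(V^{\mu_j})$ contains $\prod_{j=1}^r \SL_2$, by Lemma \ref{image_phi}, using that $\rho^{\mu_i} \not\cong \rho^{\mu_j} \otimes \chi$ for $i \neq j$ precisely because $\mu_i, \mu_j$ lie in distinct cosets of the stabilizer $\Gal(E/F)_\rho$. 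That last point — verifying the hypothesis of Lemma \ref{image_phi}, namely that the projective classes $P\rho^{\mu_j}$ are pairwise distinct — is where the definition of $\Gal(E/F)_\rho$ as the stabilizer of the $\sim$-class does the work.

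Granting that, the counting step is clean: by Lemma \ref{image_phi} the algebraic envelope of $(\rho^{\mu_1}, \dots, \rho^{\mu_r})(\Gal_k)$ contains $\prod_{j=1}^r \SL_2$, so one dimensional $\Gal_k$-subrepresentations of $\bigotimes_{j=1}^r (\rho^{\otimes m})^{\mu_j}$ correspond to one dimensional subrepresentations of $\boxtimes_{j=1}^r R^{\otimes m}$ as a $\prod \SL_2$-representation, i.e. tensor products over $j$ of one dimensional $\SL_2$-subrepresentations of $R^{\otimes m}$. Each factor $R^{\otimes m}$ has, by Lemma \ref{odd}, no such subrepresentation when $m$ is odd — giving $0$ overall — and by Lemma \ref{even} exactly $\binom{m}{m/2} - \binom{m}{m/2-1}$ of them when $m$ is even, each isomorphic to $\det(R)^{m/2}$. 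Multiplying across the $r = [E:F]/m$ factors gives $\left(\binom{m}{m/2}-\binom{m}{m/2-1}\right)^{[E:F]/m}$, and chasing the isomorphism class through the identifications — the $j$-th factor $\det(R)^{m/2}$ pulls back along $\rho^{\mu_j}$ to $\det(\rho^{\mu_j})^{m/2} = (\det \rho)^{m/2 \, \mu_j} = (\det \rho^{m/2})^{\mu_j}$, and one tensors in the character $\prod_{\xi} \chi(\xi)$ transported by each $\mu_j$ — yields
\begin{align*}
\prod_{\mu \in \Gal(E/F)/\Gal(E/F)_\rho} \left(\det(\rho)^{m/2} \prod_{\xi \in \Gal(E/F)_\rho} \chi(\xi)\right)^\mu \Bigg|_{\Gal_k},
\end{align*}
as claimed. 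I expect the main obstacle to be the bookkeeping in the second paragraph: making precise the dictionary "one dimensional $\Gal_k$-subrepresentation $\leftrightarrow$ one dimensional subrepresentation of the algebraic envelope $\leftrightarrow$ (via $\prod \SL_2 \hookrightarrow$ envelope) one dimensional subrepresentation of $R^{\otimes m}$ on each factor", together with cleanly verifying that distinct cosets $\mu_j$ give pairwise projectively-inequivalent conjugates so that Lemma \ref{image_phi} applies. The twisting by characters and the passage from $\Gal_E$ to $\Gal_k$ are routine once this framework is set up, since characters die on $\SL_2$ and restriction to an open subgroup does not change a connected algebraic envelope.
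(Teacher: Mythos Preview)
Your proposal is correct and follows essentially the same route as the paper: decompose $\otimes_{\zeta}\rho^{\zeta}$ over cosets of $\Gal(E/F)_{\rho}$ using \eqref{stable}, invoke Lemma \ref{image_phi} on the family $\{\rho^{\mu_j}\}$ indexed by coset representatives (whose hypothesis is verified precisely by the definition of the stabilizer), and then apply Lemmas \ref{even} and \ref{odd} to each factor $R^{\otimes m}$ independently. Your writeup is more explicit than the paper's in three respects --- checking that distinct cosets give projectively inequivalent conjugates, noting that restriction to $\Gal_k$ does not shrink a connected algebraic envelope, and tracing the isomorphism class of the one dimensional pieces --- but the underlying argument is the same.

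One caution on exposition: the sentence ``the whole tensor product is a twist of $\rho^{\otimes rm}$ by a character'' is not correct as stated (the conjugates $(\rho^{\otimes m})^{\mu_j}$ for distinct cosets are \emph{not} character twists of one another --- that is the whole point), and your subsequent paragraph about a single diagonal $\SL_2$ is a false start that you yourself abandon. In a clean writeup you should excise both and go directly to Lemma \ref{image_phi}, as the paper does.
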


Before proving the proposition we state a corollary of this result and Lemma \ref{lem-sl2}:

\begin{cor}\label{one-K}
Suppose that $\rho:\Gal_E \to \GL_2(\bar{\QQ}_{\ell})$, $\ell \geq 5$, has the property that $\bar{\rho}(\Gal_E)$ contains $\mathrm{SL}_2(\FF_{\ell})$. If there is a one dimensional subrepresentation of $\otimes_{\zeta \in\Gal(E/F)} \rho^{\zeta}$ then there is a subfield $E \geq K \geq F$ with $[E:K]=2$, $\Gal(E/K)=\langle \sigma \rangle$ and a character $\chi: \Gal_E \to \GL_1(\bar{\QQ}_{\ell})$ such that $\rho^{\sigma} \cong \rho \otimes \chi$. \qed
\end{cor}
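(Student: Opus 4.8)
The plan is to obtain Corollary~\ref{one-K} as a quick consequence of Proposition~\ref{one} together with Lemma~\ref{lem-sl2}. First I would invoke Lemma~\ref{lem-sl2}: since $\bar{\rho}(\Gal_E)$ contains $\SL_2(\FF_{\ell})$ and $\ell \geq 5$, the algebraic envelope of $\rho$ contains $\SL_2$, so the hypothesis of Proposition~\ref{one} is satisfied. I would then apply that proposition to the Galois extension $E/F$, the representation $\rho$, and the extension $k = E$ of $E$ (a finite extension, of degree one), which computes the number of one dimensional subrepresentations of $\otimes_{\zeta \in \Gal(E/F)}\rho^{\zeta}$ itself.

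Write $m = |\Gal(E/F)_{\rho}|$. By Proposition~\ref{one} this number of one dimensional subrepresentations is $0$ when $m$ is odd. By hypothesis it is positive, so $m$ must be even; in particular $2 \mid m$ and $\Gal(E/F)_{\rho}$ is a finite group of even order. By Cauchy's theorem it therefore contains an element $\sigma$ of order exactly $2$. I would take $K = E^{\langle\sigma\rangle}$ to be the fixed field of $\sigma$. Since $\langle\sigma\rangle \leq \Gal(E/F)$ we have $F \subseteq K \subseteq E$, and by the fundamental theorem of Galois theory $[E:K] = |\langle\sigma\rangle| = 2$ with $\Gal(E/K) = \langle\sigma\rangle$.

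Finally, because $\sigma$ lies in the stabilizer $\Gal(E/F)_{\rho}$ of the equivalence class of $\rho$ under the relation $\sim$, we have $P\rho^{\sigma} \cong P\rho$ as projective representations; comparing the two lifts to $\GL_2(\bar{\QQ}_{\ell})$ and using that the kernel of $\GL_2 \to \PGL_2$ consists of scalars produces a continuous character $\chi : \Gal_E \to \GL_1(\bar{\QQ}_{\ell})$ with $\rho^{\sigma} \cong \rho \otimes \chi$. This $\chi$ is precisely the character $\chi(\sigma)$ appearing in the discussion preceding Proposition~\ref{one}, and it completes the argument. There is no serious obstacle here; the only points requiring a little care are the passage from ``$\Gal(E/F)_{\rho}$ has even order'' to ``$\Gal(E/F)_{\rho}$ contains an element of order precisely $2$'' (Cauchy's theorem) and the verification that the resulting fixed field $K$ lies between $F$ and $E$ with $[E:K] = 2$, both of which are routine.
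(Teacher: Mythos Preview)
Your proposal is correct and matches the paper's intended argument exactly: the paper presents this corollary with a \qed and no proof, declaring it an immediate consequence of Proposition~\ref{one} and Lemma~\ref{lem-sl2}. Your write-up simply makes explicit the short steps (Cauchy's theorem to extract an order-$2$ element from the even-order stabilizer, then passing to its fixed field) that the authors left implicit.
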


\begin{proof}[Proof of Proposition \ref{one}]
 
Since $\Gal(E/F)$ acts on the set of equivalence classes of $\rho$, using the notation of \eqref{stable} we write
{\allowdisplaybreaks
\begin{align*}
\otimes_{\zeta \in\Gal(E/F)} \rho^{\zeta}=&\otimes_{\mu \in \Gal(E/F)/\Gal(E/F)_{\rho}}\left(\otimes_{\xi \in \Gal(E/F)_{\rho}}\rho^{\xi}\right)^{\mu}\\
=&\otimes_{\mu \in \Gal(E/F)/\Gal(E/F)_{\rho}}\left(\rho^{\otimes |\Gal(E/F)_{\rho}|}\prod_{\xi \in \Gal(E/F)_{\rho}}\chi(\xi)\right)^{\mu}\\
=&\otimes_{\mu \in \Gal(E/F)/\Gal(E/F)_{\rho}}\left(\rho^{\otimes m}\prod_{\xi \in \Gal(E/F)_{\rho}}\chi(\xi)\right)^{\mu}.
\end{align*}}

\noindent
By Lemma \ref{image_phi} and this decomposition we see that
the number of one dimensional subrepresentations of $\otimes_{\zeta\in\Gal(E/F)}\rho^{\zeta}$ is equal to the number of one dimensional subrepresentations of $\rho^{\otimes m}$ taken to the $|\Gal(E/F)|/m$ power. Hence the result follows from Lemma \ref{even} and Lemma \ref{odd}.
\end{proof}


\section{Asai $L$-functions}

\label{sec-Asai}

Let $E/F$ be an extension of number fields. Fix $n\geq 1$. One has the \emph{Asai representation}
\begin{align}
\mathrm{As}_{E/F}:{}^L\mathrm{Res}_{E/F}\GL_n=\GL_n(\bar{\QQ}_{\ell})^{\mathrm{Hom}_{F}(E,\bar{F})} \rtimes \Gal_F \lto \GL((\bar{\QQ}_{\ell}^n)^{\otimes \mathrm{Hom}_{F}(E,\bar{F})})
\end{align}
defined by stipulating that
\begin{align*}
\mathrm{As}_{E/F}(((g_{\sigma})_{\sigma \in \mathrm{Hom}_{F}(E,\bar{F})},1))(\otimes_{\sigma \in \mathrm{Hom}_{F}(E,\bar{F})}v_{\sigma} )=\otimes_{\sigma \in \mathrm{Hom}_{F}(E,\bar{F})}g_{\sigma}v_{\sigma}\\
\mathrm{As}_{E/F}((1)_{\sigma \in \mathrm{Hom}_F(E,\bar{F})},\tau)(\otimes_{\sigma \in \mathrm{Hom}_{F}(E, \bar{F})}v_{\sigma})=\otimes_{\sigma \in \mathrm{Hom}_{F}(E,\bar{F})}v_{\tau \circ \sigma}.
\end{align*}

A representation 
$$
\rho:\Gal_E \lto \GL_n(\bar{\QQ}_{\ell})
$$
extends uniquely to a homomorphism
$$
\rho:\Gal_F \lto {}^L\mathrm{Res}_{E/F}\GL_n
$$
commuting with the projections to $\Gal_F$ on the $L$-group side.
Thus to each such $\rho$ we 
can associate the representation
$$
\mathrm{As}_{E/F}(\rho):=\mathrm{As}_{E/F}\circ\rho :\Gal_F \lto \GL((\bar{\QQ}_{\ell}^{n})^{\otimes \mathrm{Hom}_{F}(E,\bar{F})}).
$$

We note that for all field extensions $L \geq E$ one has
$$
\mathrm{As}_{E/F}(\rho)\big|_{\Gal_L} \cong \otimes_{\sigma \in \mathrm{Hom}_{F}(E,\bar{F})}\rho^{\sigma}\big|_{\Gal_L}.
$$
Thus the Asai representation $\mathrm{As}_{E/F}(\rho)$ is a canonical extension of $\otimes_{\sigma \in \mathrm{Hom}_{F}(E,\bar{F})}\rho^{\sigma}$ to $\Gal_{F}$.


\section{Certain cohomology groups}\label{coho}

For the basic results on Shimura varieties used without further comment in this section we refer to the reader to \cite{Deligne}. We view the pair $(\mathrm{Res}_{E/\QQ}\GL_2,(\CC-\RR)^d)$ as a Shimura datum in the usual manner \cite[\S 5.1]{GG}.  Thus we have, for each compact open subgroup 
$U \leq \mathrm{Res}_{E/\QQ}\GL_{2}(\A^{\infty})=\GL_2(\A_E^{\infty})$ a (finite level) Shimura variety
$$
Y^U=\mathrm{Sh}(\mathrm{Res}_{E/\QQ}\GL_2,(\CC-\RR)^d)^U.
$$
This is a quasi-projective scheme over $\QQ$.  If $U$ is neat, then $Y^U$ is smooth.  We denote by $X^U$ the Bailey-Borel compactification of $Y^U$; it is a projective scheme over $\QQ$ with isolated singularities.

Consider the cohomology groups
\begin{align}
M_{\et}^U:&=\mathrm{Im}\left(H^{d}_{\et,c}(Y^U \times\bar{\QQ},\bar{\QQ}_{\ell}) \lto H^{d}_{\et}(Y^U \times\bar{\QQ},\bar{\QQ}_{\ell})\right), \label{Met}\\ 
M_B^U:&=\mathrm{Im}\left(H^{d}_{B,c}(Y^U(\CC), \CC) \lto H^{d}_{B}(Y^U(\CC),\CC)\right),\label{MB}
\end{align} 
where the subscript $B$ denotes the Betti or singular cohomology.
These vector spaces over $\bar{\QQ}_{\ell}$ and $\CC$, respectively, are endowed with actions of the Hecke algebras
$$
C_c^{\infty}(\GL_2(\A_E^{\infty})//U,\bar{\QQ}_{\ell}) \quad\textrm{and}\quad C_c^{\infty}(\GL_2(\A^{\infty}_E)//U,\CC),
$$
respectively.  Upon choosing an isomorphism $\iota: \CC \to \bar{\QQ}_{\ell}$ one obtains canonical comparison isomorphisms
\begin{align} \label{comparison}
M_{B}^U \lto M_{\et}^U,
\end{align}
compatible with the action of Hecke operators.   We use the symbol $M^U$ because we think of the
objects above as motives, although we will not verify that they are motives in any rigorous sense.  To ease notation, we will henceforth omit the $\bar{\QQ}_{\ell}$ and $\CC$ from our notation for Hecke algebras; the coefficient ring will be clear from the context.

We note that $M_B^U$ can be viewed as an $L^2$-cohomology group, that is, 
$$
M_B^U \cong H_{(2)}^d(Y^U(\CC),\CC) \cong IH^d(X^U(\CC),\CC)
$$
as  a $C_c^{\infty}(\GL_2(\A_E^{\infty})//U)$-module, where $IH$ denotes intersection cohomology with middle perversity \cite[\S 7.2]{GG}.  
Moreover, 
\begin{align}
M_{\et}^U \cong IH_{\et}^d(X^U \times \bar{\QQ},\bar{\QQ}_{\ell})
\end{align}
as $\Gal_{\QQ} \times C_c^{\infty}(\GL_2(\A_E^{\infty})//U)$-modules (this follows from the result above and comparison isomorphisms in the context of \'etale intersection cohomology \cite[\S 6.1]{FP}).

For an admissible representation $\pi^{\infty}$ of $\GL_2(\A_{E}^{\infty})$ denote by $M^U_{B}(\pi^{\infty}) \leq M^U_B$ the $\pi^{\infty}$-isotypic component under the Hecke algebra.  Note that there is a decomposition
$$
M_B^U=\bigoplus_{\pi}M_B^U(\pi^{\infty})
$$
where the sum is over all automorphic representations $\pi$ of $\GL_2(\A_E)$ such that  $H^{d}(\mathfrak{g}, U_{\infty};  \pi_{\infty})\neq 0$ and $\pi$ is either cuspidal or the determinant mapping followed by a character \cite[\S 7.2]{GG}.  Here as usual, $\mathfrak{g}$ is the complexification of the Lie algebra of $\GL_2(E_{\infty})$ and $U_{\infty}=(\RR_{>0} \mathrm{SO}_2(\RR))^d$
 (to see this use \cite{BoCa}).
Applying the comparison isomorphisms in \'etale cohomology, the choice of $\iota$ induces an isomorphism
\begin{align}
M_B^U(\pi^{\infty}) \lto M_{\et,\iota}^U(\pi^{\infty}):=\iota(M_B^U(\pi^{\infty})).
\end{align}
If $\pi$ is a cuspidal automorphic representation of $\GL_2(\A_E)$ with $H^{d}(\mathfrak{g}, U_{\infty};  \pi_{\infty})\neq 0$
denote by 
\begin{align}
\rho_{\pi,\iota}:\Gal_E \lto \GL_2(\bar{\QQ}_{\ell})
\end{align}
the associated Galois representation \cite{BlR, T}.  It has the property that $\det(\rho_{\pi,\iota})$ is the cyclotomic character times a finite order character.
The following theorem \cite{BrL} gives a description of $M^U_{\et,\iota}(\pi^{\infty})$:

\begin{thm}[Brylinski-Labesse]
As a $\Gal_{\QQ}  \times C_c^{\infty}(\GL_2(\A_E^{\infty})//U)$-module one has that the semisimplifications of the representations
$$
M_{\et,\iota}^U(\pi^{\infty})\quad \textrm{and} \quad\mathrm{As}_{E/\QQ}(\rho_{\pi,\iota}) \otimes \pi^{\infty U}
$$
are isomorphic.
\end{thm}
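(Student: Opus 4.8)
This is the theorem of Brylinski and Labesse \cite{BrL}; the plan is to recall the shape of its proof. A representation and its semisimplification have the same character, and a semisimple $\Gal_{\QQ}\times C_c^{\infty}(\GL_2(\A_E^{\infty})//U)$-module is determined by its character (Brauer--Nesbitt); so it suffices to show that $M^U_{\et,\iota}(\pi^{\infty})$ and $\mathrm{As}_{E/\QQ}(\rho_{\pi,\iota})\otimes\pi^{\infty U}$ have the same character, and by Chebotarev density this reduces to the equality of $\Tr\big(\Fr_p\times f\mid -\big)$ on the two modules for all $f\in C_c^{\infty}(\GL_2(\A_E^{\infty})//U)$ and all rational primes $p$ outside a finite set $S$ containing $\ell$, the primes ramified in $E$, and the primes at which $U$ or $\pi$ ramifies. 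So the first step is to compute this trace on all of $IH_{\et}^{d}(X^U\times\bar{\QQ},\bar{\QQ}_{\ell})\cong M^U_{\et}$ and then extract the $\pi^{\infty}$-isotypic summand; this is legitimate since the Hecke action on $M^U_B$ is semisimple and commutes with $\Gal_{\QQ}$, and --- by the remark following Theorem~\ref{thm-main} --- a cuspidal $\pi$ contributes to cohomology only in degree $d$, so no cuspidal class is lost in passing to the image \eqref{Met} of compactly supported cohomology.

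Second, one computes the trace by the method of Langlands, as carried out by Brylinski and Labesse: the Lefschetz number of $\Fr_p\times f$ acting on the intersection cohomology of the Baily--Borel compactification is expressed through orbital integrals and, after stabilization, compared with the Arthur--Selberg trace formula for $\mathrm{Res}_{E/\QQ}\GL_2$ (equivalently $\GL_2$ over $E$). The outcome is an identity
\[
\Tr\big(\Fr_p\times f\mid M^U_{\et}\big)=\sum_{\pi'}\Tr\big(f\mid (\pi')^{\infty U}\big)\,\Tr\big(\Fr_p\mid \mathrm{As}_{E/\QQ}(\rho_{\pi',\iota})\big)+(\text{one-dimensional terms}),
\]
the sum over cuspidal automorphic $\pi'$ of $\GL_2(\A_E)$ with $H^{d}(\g,U_{\infty};\pi'_{\infty})\neq 0$, the remaining terms coming from characters composed with the determinant. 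The boundary of $X^U$, the residual spectrum and the Eisenstein classes contribute only to those one-dimensional terms and to cohomology away from degree $d$; hence after projecting to the $\pi^{\infty}$-isotypic component of a cuspidal $\pi$ only $\pi'=\pi$ survives, and $\Tr\big(\Fr_p\times f\mid M^U_{\et,\iota}(\pi^{\infty})\big)=\Tr\big(f\mid \pi^{\infty U}\big)\,\Tr\big(\Fr_p\mid \mathrm{As}_{E/\QQ}(\rho_{\pi,\iota})\big)$, which is the character of the right-hand side.

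Third, one checks that the geometric side really produces the Asai local factors. For $p\notin S$ write $p\OO_E=\prod_i\pp_i$; the decomposition group at $p$ permutes $\mathrm{Hom}(E,\bar{\QQ}_p)$ with orbits indexed by the $\pp_i$, so the corresponding block of $\mathrm{As}_{E/\QQ}(\rho_{\pi,\iota})$ is the tensor induction of $\rho_{\pi,\iota}|_{\Gal_{E_{\pp_i}}}$ from $\Gal_{E_{\pp_i}}$ to $\Gal_{\QQ_p}$, whose Frobenius eigenvalues are determined by the Satake parameters of $\pi_{\pp_i}$ and the residue degree $[\OO_E/\pp_i:\FF_p]$ exactly as in the unramified Euler factor coming out of the point count. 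Moreover the comparison isomorphism \eqref{comparison} transports the $C_c^{\infty}(\GL_2(\A_E^{\infty})//U)$-action compatibly, so the Hecke multiplicity on both sides is the space $\pi^{\infty U}$ of $U$-fixed vectors in $\pi^{\infty}$. Combining the trace identity with Chebotarev density and Brauer--Nesbitt then yields the claimed isomorphism of semisimplifications.

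The main obstacle is the second step: genuinely proving the trace identity requires the Langlands--Kottwitz analysis of a non-compact Shimura variety whose canonical compactification is singular, the stabilization of the resulting trace formula, and careful control of the boundary and Eisenstein contributions so as to confirm that they do not interfere with the cuspidal part --- this is exactly the content of \cite{BrL}. Granting that paper, the remaining work is the essentially bookkeeping task of matching its normalization of the Hasse--Weil zeta function with the $L$-group description of $\mathrm{As}_{E/\QQ}$ in \S\ref{sec-Asai}, identifying its Galois representation with the one later characterized by Blasius--Rogawski and Taylor \cite{BlR, T}, and tracking the cyclotomic normalization so that $\det\rho_{\pi,\iota}$ --- the cyclotomic character times a finite-order character --- makes $\mathrm{As}_{E/\QQ}(\rho_{\pi,\iota})$ pure of weight $d$ with no residual Tate twist.
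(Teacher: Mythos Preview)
The paper does not prove this statement at all: it is quoted as the theorem of Brylinski and Labesse \cite{BrL} and used as a black box, with no argument given in the text. So there is no ``paper's own proof'' to compare against; your proposal is effectively a summary of the strategy of \cite{BrL} rather than of anything in this paper.

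That said, as a sketch of the Brylinski--Labesse argument your outline is broadly accurate: reduce to a trace identity at Frobenius elements via Brauer--Nesbitt and Chebotarev, compute the Lefschetz number of $\Fr_p\times f$ on intersection cohomology by the Langlands--Kottwitz method, compare with the Arthur--Selberg trace formula for $\GL_2$ over $E$, and match the unramified local factors with the tensor-induction description of $\mathrm{As}_{E/\QQ}$. You correctly flag that the substantive content --- handling the non-compactness, the singular Baily--Borel boundary, and the separation of cuspidal from Eisenstein/residual contributions --- is precisely what \cite{BrL} establishes, and that the remaining work is normalization bookkeeping. For the purposes of this paper that level of detail is more than is needed: a one-line citation to \cite{BrL} (together with the identification of $\rho_{\pi,\iota}$ via \cite{BlR, T}) is what the authors actually do.
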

Here the superscript $U$ means the vectors fixed by $U$. Thus $M_{\et,\iota}^{U}(\pi^{\infty})$ is of rank $2^{[E:\QQ]}$ as a $C_c^{\infty}(\GL_2(\A_E^{\infty})//U)$-module.   We also remark that by ``semisimplification'' we mean semisimplification as a $\Gal_{\QQ}$-module.  As in the introduction, we use a superscript ``${}^{ss}$'' to denote the semisimplification of a representation of $\Gal_{k}$ for number fields $k/\QQ$.

\begin{prop}\label{one-motive}
Assume that $\pi$ is not a dihedral representation, that $\ell$ is sufficiently large (in a sense depending on $\pi$), and
let $m=|\Gal(E/\QQ)_{\rho_{\pi,\iota}}|$. For even $d$,
there is an abelian extension $k$ of $E$ with the property that for all finite extensions $k'/k$ one has that
\begin{align}
M_{\et,\iota}^{U,ss}(\pi^{\infty})(d/2)^{\Gal_{k'}}
\end{align}
has rank $0$ as a $C_c^{\infty}(\GL_2(\A_E^{\infty})//U)$-module if $m$ is odd and otherwise has rank
\begin{align} \label{dim23}
\left(\binom{m}{m/2}-\binom{m}{m/2-1}\right)^{d/m}.
\end{align}
\end{prop}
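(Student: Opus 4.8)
The plan is to deduce Proposition~\ref{one-motive} essentially as a translation of Proposition~\ref{one} through the Brylinski--Labesse description of $M^U_{\et,\iota}(\pi^\infty)$. First I would invoke the theorem of Brylinski--Labesse to replace $M^{U,ss}_{\et,\iota}(\pi^\infty)$ by $\mathrm{As}_{E/\QQ}(\rho_{\pi,\iota}) \otimes \pi^{\infty U}$ as a $\Gal_\QQ \times C_c^\infty(\GL_2(\A_E^\infty)//U)$-module. Since $\pi^{\infty U}$ is a fixed finite-dimensional vector space on which the Hecke algebra acts (through a character, after semisimplification, on each Jordan--H\"older constituent), counting the rank of the space of Tate classes as a Hecke module reduces to counting the rank over $\bar\QQ_\ell$ of the $\Gal_{k'}$-invariants of $\mathrm{As}_{E/\QQ}(\rho_{\pi,\iota})(d/2)$, which in turn is the number of one-dimensional subrepresentations of $\mathrm{As}_{E/\QQ}(\rho_{\pi,\iota})|_{\Gal_{k'}}$ that become trivial after the Tate twist by $d/2$ — equivalently, the number of one-dimensional subrepresentations on which $\Gal_{k'}$ acts through the $(-d/2)$-power of the cyclotomic character.

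Next I would use the identity recorded in \S\ref{sec-Asai}, namely $\mathrm{As}_{E/\QQ}(\rho_{\pi,\iota})|_{\Gal_L} \cong \otimes_{\sigma \in \Gal(E/\QQ)} \rho_{\pi,\iota}^\sigma|_{\Gal_L}$ for $L \geq E$ (using that $E/\QQ$ is Galois so $\mathrm{Hom}_\QQ(E,\bar E) = \Gal(E/\QQ)$), to pass from the Asai representation to the tensor product of Galois conjugates. To apply Proposition~\ref{one} I need its hypothesis that the algebraic envelope of $\rho_{\pi,\iota}$ contains $\SL_2$; this is exactly where the ``$\ell$ sufficiently large'' and ``$\pi$ nondihedral'' assumptions enter — by the results of Dimitrov \cite{Dimi} referenced in the introduction, for $\ell$ large (depending on $\pi$) one has $\bar\rho_{\pi,\iota}(\Gal_E) \supseteq \SL_2(\FF_\ell)$, and then Lemma~\ref{lem-sl2} gives the desired containment of $\SL_2$ in the algebraic envelope. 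Proposition~\ref{one} then yields that for any finite $k/E$ the number of one-dimensional subrepresentations of $\otimes_{\zeta \in \Gal(E/\QQ)}\rho_{\pi,\iota}^\zeta|_{\Gal_k}$ is $0$ if $m$ is odd and $\big(\binom{m}{m/2} - \binom{m}{m/2-1}\big)^{d/m}$ if $m$ is even, with each such subrepresentation isomorphic to the explicit product of conjugates of $\det(\rho_{\pi,\iota})^{m/2}\prod_\xi \chi(\xi)$ displayed there.

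The remaining point is to produce the abelian extension $k/E$ and to match the Tate twist. Each one-dimensional subrepresentation above is, by the explicit formula in Proposition~\ref{one}, a product of conjugates of powers of $\det(\rho_{\pi,\iota})$ and of the finite-order characters $\chi(\xi)$; since $\det(\rho_{\pi,\iota})$ is the cyclotomic character times a finite-order character, each such subrepresentation is the $(d/2)$-th power of the cyclotomic character times a character of finite order, because each of the $d/m$ factors indexed by $\mu \in \Gal(E/\QQ)/\Gal(E/\QQ)_{\rho}$ contributes $\det(\rho_{\pi,\iota})^{m/2}$ up to finite order and $(d/m)\cdot(m/2) = d/2$. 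Hence over the field $k$ cut out by the (finitely many) finite-order characters occurring — a finite abelian extension of $E$ — and over any finite $k'/k$, all of these one-dimensional pieces become isomorphic to the $d/2$-power of the cyclotomic character, i.e.\ they are precisely the Tate classes after twisting by $(d/2)$; conversely any one-dimensional subrepresentation of $\mathrm{As}_{E/\QQ}(\rho_{\pi,\iota})(d/2)|_{\Gal_{k'}}$ arises this way since restriction to a finite extension does not create new one-dimensional subrepresentations of the tensor product beyond those counted by Proposition~\ref{one}. Finally I would carry the count back through the $\otimes \pi^{\infty U}$ factor, noting that tensoring with the fixed Hecke module $\pi^{\infty U}$ and semisimplifying does not change the rank as a $C_c^\infty(\GL_2(\A_E^\infty)//U)$-module, giving the stated rank \eqref{dim23}. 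I expect the main obstacle to be the careful bookkeeping in this last step: verifying that the Hecke-module rank is genuinely the $\bar\QQ_\ell$-multiplicity count coming from Proposition~\ref{one} (so that the exponent $d/m$ is correct and not, say, multiplied or divided by $\dim \pi^{\infty U}$), and checking that the finite-order characters can indeed be trivialized simultaneously over a single finite \emph{abelian} extension of $E$ — which follows because each $\chi(\xi)$ and each finite-order part of $\det(\rho_{\pi,\iota})^\mu$ has abelian image, so their compositum is abelian over $E$.
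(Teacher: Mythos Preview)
Your proposal is correct and follows essentially the same route as the paper: invoke Dimitrov's result for $\ell$ large so that (via Lemma~\ref{lem-sl2}) the algebraic envelope of $\rho_{\pi,\iota}$ contains $\SL_2$, apply Proposition~\ref{one} to count the one-dimensional constituents of $\otimes_\zeta \rho_{\pi,\iota}^\zeta$, and use that $\det(\rho_{\pi,\iota})$ is cyclotomic times a finite-order character to match the Tate twist and produce the abelian extension $k$. The paper's own proof is a two-sentence compression of exactly this argument; your additional bookkeeping around Brylinski--Labesse, the Hecke-module rank versus $\bar\QQ_\ell$-dimension, and the abelianity of $k$ simply fills in details the paper leaves implicit.
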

\noindent In other words, if $m$ is even, \eqref{dim23} is the rank of the space of Tate cycles as a $C_c^{\infty}(\GL_2(\A_E^{\infty})//U)$-module.

\begin{proof}  
By \cite[Proposition 3.8]{Dimi} for all but finitely many $\ell$ the reduction of the image of $\rho_{\pi,\iota}$ modulo $\ell$ 
contains $\mathrm{SL}_2(\FF_{\ell})$.
Thus the proposition follows from 
Proposition \ref{one} together with the fact that $\det(\rho_{\pi,\iota})$ is the cyclotomic character times a finite order character. 
\end{proof}

\section{Twisting by characters}\label{sec-twist}

As explained by Murty and Ramakrishnan \cite{MR}, there is another natural family of correspondences on $M_?^U$, $?\in \{\et, B\}$, besides the Hecke correspondences, namely the twisting correspondences.  In this section we recall their construction following the exposition of \cite[\S 9.3]{GG}.

Fix an ideal $\cc\subset \OO_E$ and denote the congruence subgroup of integral matrices of ``Hecke type" by
$$
U_{1}(\cc):=\left\{\begin{pmatrix}a &b\\c&d\end{pmatrix}\in\GL_2(\hat{\OO}_E) \,:\,  d-1, \, c\in\cc\hat{\OO}_E \right\}.
$$Let $Y_{1}(\cc)$ denote the corresponding Hilbert modular variety, that is
$$
Y_{1}(\cc):=Y^{U_{1}(\cc)}.
$$Similarly we use $X_{1}(\cc)$ for its Baily-Borel compactification.
Let $\theta : E^{\times}\backslash \A_E^{\times}\lto\CC^{\times}$ be a finite order Hecke character with conductor $\bb$. 
To ease notation, we fix an isomorphism $\iota:\CC \to \bar{\QQ}_{\ell}$ and sometimes view $\theta$ as taking values in $\bar{\QQ}_{\ell}$; in other words we sometimes identify $\theta$ and $\iota \circ \theta$.
Suppose that $\bb\neq \OO_E$. Let $b$ be a finite id\`{e}le with $[b]=\bb$. Define the fractional ideal $\Upsilon=b^{-1}\hat{\OO}_E$ of $\prod_{\pp_v|\bb}E_v\times\prod_{\pp_v\nmid\bb}\OO_v$ by
$$
\Upsilon:=\left\{t=(t_v)\in\prod_{\pp_v|\bb}E_v\times\prod_{\pp_v\nmid\bb}\OO_v: \mathrm{ord}_v(t_v)\geq-\mathrm{ord}_v(b) \text{ whenever } \pp_v|\bb\right\}.
$$
Let $\widetilde{\Upsilon}=b^{-1}\hat{\OO}_E/\hat{\OO}_E$ be a set of representatives for $\Upsilon$ modulo $\hat{\OO}_E=\prod_v\OO_v$. Denote by $\theta_{\bb}:\widetilde{\Upsilon}\lto\CC$ the map defined by setting
$$
\theta_{\bb}(t)=\begin{cases} \theta(t)\quad \text{if } t\in \widetilde{\Upsilon}^{\times}:=\{\frac{x}{b} \in \widetilde{\Upsilon}: x \in \OO_E^{\times}\}\\0\quad\text{otherwise.}\end{cases}
$$
For $t\in \Upsilon$ define $u_t=u(t)\in \GL_2(\A_E)$ by $u(t)_v=\big(\begin{smallmatrix}1&0\\0&1\end{smallmatrix}\big)$ if $v\nmid \bb$ and $u(t)_v=\big(\begin{smallmatrix}1&t_v\\0&1\end{smallmatrix}\big)$ if $v|\bb$. Then there is a correspondence
\begin{equation}\label{corres}
\begin{CD}
Y_{1}(\cc\bb^2) @>{\cdot u_t}>> Y_{1}(\cc\bb^2) @> \pi >> Y_{1}(\cc),
\end{CD}
\end{equation}
where the second map $\pi$ is the projection map.  If $\bb=\OO_E$ then we replace this correspondence by the identity map $Y_{1}(\cc) \to Y_{1}(\cc)$.
Let $\CC_X$ be the locally constant sheaf with stalk $\CC$ on $X$ for topological spaces $X$.  If $X$ is a quasi-projective scheme over $\QQ$,  $w$ is a place above a rational prime $\ell$, $\varpi_w$ is a uniformizer of $\OO_w$, and $n$ is an integer, let ${(\OO_w/{\varpi_w^n})}_X$ be the locally constant sheaf with stalk $\OO_w/\varpi_w^n$ on the \'etale site of $X \times \bar{\QQ}$.  Assume that $w$ is chosen so that $\theta$ has values in $\OO_w$.  Then we have the following lemma (see \cite[Lemma 9.3]{GG}):

\begin{lem}\label{betty} For $\textbf{R} \in \{\CC,\OO_w/\varpi_w^n\}$ and
 $t\in\Upsilon$, the mapping (which we denote by $\cdot u_t$)
\begin{equation}\label{ut}
[g, v]\lto [g u_t, \theta (\det g)\theta_{\bb}(t)^{-1}v]
\end{equation}
gives a well-defined, canonical isomorphism
$$
\textbf{R}_{Y_{1}(\cc\bb^2)}\lto \textbf{R}_{Y_{1}(\cc\bb^2)}.
$$
Equivalently, the mapping $P=\pi\circ (\cdot u_t)$ defines a mapping
$$
P : \textbf{R}_{Y_{1}(\cc\bb^2)}\lto \textbf{R}_{Y_{1}(\cc)}.
$$ \qed
\end{lem}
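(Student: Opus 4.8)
The plan is to unwind the adelic uniformization of $Y_1(\cc\bb^2)$ and reduce the assertion to an id\`elic bookkeeping computation. I would begin by recalling that, complex-analytically, $Y_1(\cc\bb^2)(\CC)=\GL_2(E)\backslash (\CC-\RR)^d\times\GL_2(\A_E^{\infty})/U_1(\cc\bb^2)$, and that the constant sheaf $\mathbf{R}_{Y_1(\cc\bb^2)}$ is the sheaf attached to the quotient of $(\CC-\RR)^d\times\GL_2(\A_E^{\infty})\times\mathbf{R}$ by the action of $\GL_2(E)\times U_1(\cc\bb^2)$ that is diagonal on the first two factors and trivial on the third; in the \'etale setting one pulls this description back along a neat cover of $Y_1(\cc\bb^2)$ or transports it through the comparison isomorphism. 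Writing a point upstairs as $(\tau,g,v)$ and its image downstairs as $[g,v]$ (suppressing $\tau$), giving a morphism $\mathbf{R}_{Y_1(\cc\bb^2)}\to\mathbf{R}_{Y_1(\cc\bb^2)}$ of the stated shape amounts to giving a $\GL_2(E)\times U_1(\cc\bb^2)$-equivariant self-map of this quotient-datum lying over the correspondence \eqref{corres}; since $u_t$ need not normalize $U_1(\cc\bb^2)$, I would carry this out after passing to the common cover $Y^{U_1(\cc\bb^2)\cap u_tU_1(\cc\bb^2)u_t^{-1}}$, on which right translation by $u_t$ is a genuine isomorphism, and then descend.

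The content is the verification that $[g,v]\mapsto[gu_t,\theta(\det g)\theta_{\bb}(t)^{-1}v]$ is independent of the chosen representative. Replacing $(\tau,g,v)$ by $(\gamma\tau,\gamma g k,v)$ with $\gamma\in\GL_2(E)$ and $k\in U_1(\cc\bb^2)$, the scalar becomes $\theta(\det\gamma)\,\theta(\det g)\,\theta(\det k)\,\theta_{\bb}(t)^{-1}$; here $\theta(\det\gamma)=1$ because $\det\gamma\in E^{\times}$ and $\theta$ is a Hecke character, while the surviving factor $\theta(\det k)$ is precisely what is needed to compensate the discrepancy between $gku_t$ and $gu_t$ in the quotient — the failure of $u_t$ to normalize $U_1(\cc\bb^2)$ and the dependence on $g$ being linked through a unit that $\theta$ detects, with $\theta$ trivial on $1+\bb\hat{\OO}_E$ (its conductor being $\bb$) matching the level $\cc\bb^2$. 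I would organize this one place $v\mid\bb$ at a time. I would also check that the recipe is unchanged when $b$ is replaced by another id\`ele representing $\bb$ and when a different system of representatives $\widetilde{\Upsilon}$ for $\Upsilon/\hat{\OO}_E$ is chosen; the factor $\theta_{\bb}(t)^{-1}$, built from the definition of $\theta_{\bb}$ on $\widetilde{\Upsilon}^{\times}$ together with $\theta|_{E^{\times}}=1$, is exactly what absorbs these changes, which is the content of the word ``canonical.''

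Three formal points then complete the argument. First, the map is an isomorphism: its inverse is the analogous assignment attached to $u_t^{-1}$ (equivalently to $u_{-t}$, after adjusting by $U_1(\cc\bb^2)$) with scalar $\theta(\det g)^{-1}\theta_{\bb}(t)$, and the composite is the identity on stalks once unwound. Second, since $\cc\bb^2\subseteq\cc$ one has $U_1(\cc\bb^2)\subseteq U_1(\cc)$, so $\pi\colon Y_1(\cc\bb^2)\to Y_1(\cc)$ is a genuine finite morphism with a canonical identification $\pi^{*}\mathbf{R}_{Y_1(\cc)}=\mathbf{R}_{Y_1(\cc\bb^2)}$; composing the isomorphism just constructed with this identification yields the asserted mapping $P\colon\mathbf{R}_{Y_1(\cc\bb^2)}\to\mathbf{R}_{Y_1(\cc)}$ lying over $P=\pi\circ(\cdot u_t)$, which is the ``equivalently'' clause. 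Third, the displayed formula is word-for-word the same for $\mathbf{R}=\CC$ and for $\mathbf{R}=\OO_w/\varpi_w^n$ — the latter using the hypothesis that $\theta$ takes values in $\OO_w$, so that its reduction modulo $\varpi_w^n$ makes sense — and the Betti and \'etale constant sheaves correspond under the comparison isomorphism, so the two cases are treated uniformly; in the \'etale case one works over $Y_1(\cc\bb^2)\times\bar{\QQ}$, so no rationality hypothesis on $\theta$ is needed.

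The main obstacle is the well-definedness computation of the second paragraph: checking in coordinates that the ambiguity in $[gu_t]$ created by $u_t$ not normalizing $U_1(\cc\bb^2)$ is cancelled by $\theta(\det g)$, and that the dependence on the representative $t$ is cancelled by $\theta_{\bb}(t)^{-1}$. This is where the exact conductor of $\theta$, the precise definition of $\Upsilon$, and the definition of $\theta_{\bb}$ are all used; it is careful id\`elic bookkeeping rather than a conceptual difficulty, but it is the part with genuine content, everything else being formal once the quotient-bundle description is set up.
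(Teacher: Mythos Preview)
The paper does not actually prove this lemma: it is stated with a \qed\ immediately following the statement and is introduced by ``Then we have the following lemma (see \cite[Lemma 9.3]{GG}),'' so the entire argument is delegated to the cited monograph. Your proposal, by contrast, sketches a direct proof via the adelic double-coset description and an explicit well-definedness check. That strategy is the correct one and is presumably what the cited reference carries out; your identification of the main content---that the scalar $\theta(\det g)\theta_{\bb}(t)^{-1}$ is exactly what is needed to make the assignment invariant under $\GL_2(E)\times U_1(\cc\bb^2)$, using that $\theta$ has conductor $\bb$---is on target, and your observation that $u_t$ need not normalize $U_1(\cc\bb^2)$ (so that one should work on an auxiliary cover or only with the composite $P$) is a genuine subtlety that the paper's diagram \eqref{corres} glosses over. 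One small caution: your well-definedness paragraph asserts that the factor $\theta(\det k)$ ``compensates the discrepancy'' but does not actually perform the local computation showing this; carrying that out carefully at places $v\mid\bb$, using $d\equiv 1\pmod{\cc\bb^2}$ and $t_v\in\varpi_v^{-\mathrm{ord}_v(b)}\OO_v$, is where the argument would either succeed or fail, so in a written-up version you would want to make that step explicit rather than narrate it.
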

Here when $\bb=\OO_E$ we set $\theta_{\bb}(t)=1$, let $u_t$ be the identity matrix and we replace $P$ by the identity mapping from $Y_{1}(\cc)$ to itself.  

It may not be evident that this definition makes sense in the \'etale setting because it appears to depend on $\det(x_{\infty})$.  To explain this, recall that
for all $U \leq \mathrm{Res}_{E/\QQ}\GL_2(\A^{\infty})$ the determinant $\det:\mathrm{Res}_{E/\QQ}\GL_2 \to \mathrm{Res}_{E/\QQ}\GG_m$ induces a morphism of Shimura varieties 
$$
Y^U \lto \pi_0(Y^U)
$$
where $\pi_0(Y^U)$ is the scheme of connected components of $Y^U$.
Letting $E_+^{\times}$ denote the group of totally positive elements of $E^{\times}$,  the definition of the lift of $P$ in fact only depends on the image of $\det(x)$ in 
$$
E^{\times} \RR^{[E:\QQ]}_{>0} \backslash \A_E^{\times}/\det(U_{1}(\cc\bb^2))=E_+^{\times} \backslash \A_E^{\infty \times}/\det(U_{1}(\cc\bb^2))=\pi_0(Y_{1}(\cc\bb^2))(\CC).
$$
In other words, the coefficients $\theta(x)$ depend only on the connected component that $x$ lies in, and these are all defined over a suitable finite extension of $\QQ$. 

Given a finite order Hecke character $\theta$ with conductor $\bb$, define the {\it twisting correspondence} $\mathcal{T}_{\theta}(\cc\bb^2)_*$ to be the disjoint union over $t\in \widetilde{\Upsilon}$ of the correspondences \eqref{corres}.  In the case $\textbf{R}=\CC$ this yields a map
\begin{align}
\mathcal{T}_{\theta}(\cc\bb^2)_*:M_B^{U_{1}(\cc\bb^2)} \lto M_B^{U_{1}(\cc)}.
\end{align}
In the \'etale case after passing to the limit over $n$ and tensoring with $\bar{k}_w \cong \bar{\QQ}_{\ell}$, where $k_w$ is the fraction field of $\OO_{w}$, we obtain
\begin{align}
\mathcal{T}_{\theta}(\cc\bb^2)_*:M_{\et}^{U_{1}(\cc\bb^2)} \lto M_{\et}^{U_{1}(\cc)}.
\end{align}
These correspondences are compatible with the comparison isomorphisms \eqref{comparison}.

As noted above, the component set is equipped with a canonical isomorphism $\pi_0(Y^U)(\CC) = E^{\times}_+ \backslash \A_E^{\infty \times}/\det(U)$.  We therefore have a composite map
$$
\theta:  Y_{1}(\cc\bb^2)(\CC) \lto \pi_0(Y_{1}(\cc\bb^2))(\CC) \lto \CC.
$$
The field of definition $E_{\theta}$ of $\theta$ is therefore defined.  Using the description of the $\Gal(\CC/\QQ)$ action on $\pi_0(Y_{1}(\cc\bb^2))$ (see \cite[\S 13, p. 349]{Mi} for details) we see that it is an abelian extension of $E$ contained in the narrow ring class field of conductor $\cc\bb^2$.  Using this fact, one checks the following lemma:

\begin{lem} 
The map $\mathcal{T}_{\theta}(\cc\bb^2)$ may be viewed as a finite $\bar{\QQ}_{\ell}$-linear combination of correspondences from $Y_{1}(\cc\bb^2) \times E_{\theta}$ to $Y_{1}(\cc) \times E_{\theta}$ that extend to correspondences on the Baily-Borel compactification. Thus $\mathcal{T}_{\theta}(\cc\bb^2)$ descends to a homomorphism
$$
\mathcal{T}_{\theta}(\cc\bb^2)_*:(M_{\et}^{U_{1}(\cc\bb^2)})^{\Gal_{E_{\theta}}} \lto (M_{\et}^{U_{1}(\cc)})^{\Gal_{E_{\theta}}}.
$$ \qed
\end{lem}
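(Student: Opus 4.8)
The plan is to decompose $\mathcal{T}_{\theta}(\cc\bb^2)_*$ into a purely geometric part, which is manifestly defined over the reflex field $\QQ$, and a coefficient twist, which is the only source of a nontrivial field of definition; I will then show the twist descends precisely to $E_{\theta}$. By the last sentence preceding the lemma and the compatibility of the $\CC$- and $\et$-versions with the comparison isomorphisms, it suffices to work with the \'etale realization.

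First I would unwind the construction of \S \ref{sec-twist}: $\mathcal{T}_{\theta}(\cc\bb^2)_*$ is the sum over $t \in \widetilde{\Upsilon}$ of the composite of the geometric correspondence
\[
Y_{1}(\cc\bb^2) \xrightarrow{\ \cdot\, u_t\ } Y_{1}(\cc\bb^2) \xrightarrow{\ \pi\ } Y_{1}(\cc)
\]
with the automorphism of the constant $\bar{\QQ}_{\ell}$-sheaf on $Y_{1}(\cc\bb^2)$ obtained by passing to the limit over $n$ in Lemma \ref{betty}, i.e.\ multiplication by the locally constant function $[g,\cdot]\mapsto \theta(\det g)\theta_{\bb}(t)^{-1}$. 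The Hecke-type correspondence $\cdot\, u_t$ and the level-lowering projection $\pi$ are correspondences between canonical models of Shimura varieties for $\Res_{E/\QQ}\GL_2$, hence are defined over the reflex field $\QQ$, and they extend to correspondences on the Baily--Borel compactifications $X_{1}(\cc\bb^2)$ and $X_{1}(\cc)$ over $\QQ$; in particular they are a fortiori defined over $E_{\theta}$ and extend to the $X_{1}(-)$.

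Next I would treat the coefficient twist. As recalled just before the lemma, the function $\theta\circ\det$ factors through $Y_{1}(\cc\bb^2)\to\pi_0(Y_{1}(\cc\bb^2))$, so it is constant on geometric components and is nothing but the function $\theta$ on the finite $\Gal_{\QQ}$-set $\pi_0(Y_{1}(\cc\bb^2))(\bar{\QQ}) = E_+^{\times}\backslash\A_E^{\infty\times}/\det(U_{1}(\cc\bb^2))$. The associated automorphism of the constant sheaf descends to $Y_{1}(\cc\bb^2)\times k$ exactly when $\theta$ is $\Gal_k$-invariant as a function on this set, and by the definition of $E_{\theta}$ --- combined with the explicit description of the $\Gal_{\QQ}$-action on $\pi_0$ via the reciprocity map \cite[\S 13]{Mi} --- the minimal such $k$ is $E_{\theta}$. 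Grouping the geometric components into $\Gal_{E_{\theta}}$-orbits $O$, on each of which $\theta$ takes a constant value $\theta_O\in\bar{\QQ}_{\ell}^{\times}$, the twist over $E_{\theta}$ becomes multiplication by $\theta_O$ on the component of $Y_{1}(\cc\bb^2)\times E_{\theta}$ indexed by $O$; since $\pi_0(X^U)=\pi_0(Y^U)$ (each component of $Y^U$ is dense in a component of $X^U$), this automorphism extends to $X_{1}(\cc\bb^2)\times E_{\theta}$.

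Combining the two parts, $\mathcal{T}_{\theta}(\cc\bb^2)_*$ is, over $E_{\theta}$, the finite $\bar{\QQ}_{\ell}$-linear combination $\sum_{t,O}\theta_O\,\theta_{\bb}(t)^{-1}\cdot[\,\pi\circ(\cdot\,u_t)\ \text{restricted to the component }O\,]$ of honest correspondences from $X_{1}(\cc\bb^2)\times E_{\theta}$ to $X_{1}(\cc)\times E_{\theta}$, which is the first assertion; the values of $\theta$ are exactly what gets absorbed into the $\bar{\QQ}_{\ell}$-coefficients. Such correspondences act on $IH^{d}_{\et}(X\times\bar{\QQ},\bar{\QQ}_{\ell})\cong M^{U}_{\et}$ by the usual formalism, and since each is defined over $E_{\theta}$ the induced map commutes with $\Gal_{E_{\theta}}$ and hence preserves $\Gal_{E_{\theta}}$-invariants, giving the displayed homomorphism. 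I expect the main obstacle to be the descent of the coefficient system: precisely identifying $E_{\theta}$ (a subfield of the narrow ring class field of conductor $\cc\bb^2$) as the field of invariance of $\theta$ under the Galois action on $\pi_0$, and checking that a component-wise-constant sheaf automorphism descends exactly over that field; the compatibility with the passage to the $X_{1}(-)$ and to intersection cohomology should then be routine.
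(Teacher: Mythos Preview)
The paper does not give a proof of this lemma: it is stated with a \qed\ immediately following the sentence ``Using this fact, one checks the following lemma,'' where ``this fact'' is the identification of $E_{\theta}$ as an abelian extension of $E$ inside the narrow ring class field via the Galois action on $\pi_0(Y_{1}(\cc\bb^2))$. Your proposal correctly supplies the details the authors leave implicit, and follows exactly the line they sketch in the paragraph preceding the lemma --- separating the $\QQ$-rational geometric correspondences $\pi\circ(\cdot\,u_t)$ from the component-wise constant coefficient twist by $\theta\circ\det$, and observing that the latter descends precisely to $E_{\theta}$.
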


We note in addition that for each automorphic representation $\pi$ of $\GL_2(\A_E)$ the twisting correspondence induces push-forward and pull-back maps
\begin{align*}
\mathcal{T}_{\theta}(\cc \bb^2)_*:
M_B^{U_{1}(\cc\bb^2)}(\pi^{\infty} \otimes \theta^{\infty}) & \lto M_B^{U_{1}(\cc)}(\pi^{\infty}) \\
\mathcal{T}_{\theta}(\cc \bb^2)^*:M_B^{U_{1}(\cc)}(\pi^{\infty}) & \lto M_B^{U_{1}(\cc\bb^2)}(\pi^{\infty} \otimes \theta^{\infty})
\end{align*}
and hence
\begin{align*}
\mathcal{T}_{\theta}(\cc \bb^2)_*:M_{\et,\iota}^{U_{1}(\cc\bb^2)}(\pi^{\infty} \otimes \theta^{\infty})^{\Gal_{E_{\theta}}} &\lto M_{\et,\iota}^{U_{1}(\cc)}(\pi^{\infty})^{\Gal_{E_{\theta}}}\\
\mathcal{T}_{\theta}(\cc \bb^2)^*:M_{\et,\iota}^{U_{1}(\cc)}(\pi^{\infty} )^{\Gal_{E_{\theta}}} &\lto M_{\et,\iota}^{U_{1}
(\cc\bb^2)}(\pi^{\infty} \otimes \theta^{\infty})^{\Gal_{E_{\theta}}}
\end{align*}
(compare \cite[Proposition 9.4]{GG}).

\section{Twisted Hirzebruch-Zagier cycles} \label{sec-HZ}

Our goal in this section is to define twisted Hirzebruch-Zagier cycles and give a criterion  for their nontriviality.  

Suppose that $K \leq E$ is a quadratic subfield.  Using the definitions from above we obtain a Shimura variety
$$
Y_K^{U_K}
$$
for all compact open subgroups $U_K \leq \mathrm{Res}_{K/\QQ}\GL_2(\A^{\infty})$; it is again a quasi-projective scheme
over $\QQ$.  Fix $U \leq \mathrm{Res}_{E/\QQ}\GL_2(\A^{\infty})$, set $U_K:=U \cap \mathrm{Res}_{K/\QQ}\GL_2(\A^{\infty})$,
and consider the natural inclusion morphism
\begin{equation}\label{iota}
Y_K^{U_K} \lto Y^U.
\end{equation}
It induces Gysin maps
\begin{align} 
H_{\et,c}^{0}(Y_K^{U_K} \times\bar{\QQ},\,\bar{\QQ}_{\ell}) &\lto 
H_{\et,c}^{d}(Y^U \times \bar{\QQ},\,\bar{\QQ}_{\ell}(d/2)),\label{Gys-et}\\
H_{B,c}^{0}(Y_K^{U_K} (\CC), \,\CC) &\lto 
H_{B,c}^{d}(Y^U (\CC),\,\CC(d/2)).\label{Gys-B}
\end{align}
The images admit natural maps to $M_{\et}^U$ and  $M_{B}^U$ respectively.  

We will denote the image of a fundamental class under the composition of \eqref{Gys-et} (resp. \eqref{Gys-B}) and the maps to $M_{?}^U(d/2)$ by
\begin{equation}\label{HZcycle}
[Z^K] \in M_?^U(d/2)
\end{equation}
and refer to it as a {\it Hirzebruch-Zagier cycle}. 

Now for $\cc, \bb \subset\OO_E$, let $U=U_{1}(\cc \bb^2)$ and let $\theta:E^{\times} \backslash \A_E^{\times} \to \CC^{\times}$ be a finite order Hecke character of conductor $\bb$.  We let 
\begin{equation}\label{tHZcycle}
[Z^{K,\theta}]=\mathcal{T}_{\theta}(\cc\bb^2)_*[Z^K] \in M^{U_{1}(\cc)}_?(d/2)
\end{equation}
and refer to it as a {\it twisted Hirzebruch-Zagier cycle}.  

Let $\pi$ be a cuspidal automorphic representation of $\GL_2(\A_E)$ satisfying $H^{d}(\mathfrak{g},U_{\infty};\pi_{\infty}) \neq 0$ and let $\cc$ be the conductor of $\pi$ (for unexplained notation, see \S \ref{coho}). Let $K \leq E$ be a quadratic subfield and let $\theta:E^{\times} \backslash \A_E^{\times} \to \CC^{\times}$ be a finite order Hecke character of conductor $\bb$. We now state our basic criterion for the projection of $[Z^{K,\theta}]$ to  $M_{\et, \iota}^{U_{1}(\cc)}(\pi^{\infty})(d/2)$ to be nontrivial. 

\begin{thm}\label{nontrivial}
Suppose that $\pi$ is a base change of a cuspidal automorphic representation $\pi_K$ of $\GL_2(\A_K)$ with central character $\omega_K$ and that $\theta|_{\A_{K}^{\times}}=\omega_K \eta$, where $\langle \eta \rangle =\Gal(E/K)^{\wedge}$.
Then $[Z^{K,\theta}]$ projects nontrivially onto $M_{\et,\iota}^{U_{1}(\cc)}(\pi^{\infty})(d/2)$. 
\end{thm}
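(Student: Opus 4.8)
The plan is to translate the geometric nontriviality statement into the non-vanishing of a period integral and then to evaluate that period using the theory of Asai $L$-functions together with the base-change hypothesis on $\pi$.

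By the comparison isomorphism \eqref{comparison} and the compatibility of the Gysin maps and the twisting correspondence $\mathcal{T}_\theta(\cc\bb^2)_*$ with $\iota$, it suffices to prove the analogous statement in Betti cohomology: that $[Z^{K,\theta}]$ projects nontrivially onto $M_B^{U_{1}(\cc)}(\pi^\infty)(d/2)$. Identifying $M_B^U$ with the middle-degree intersection cohomology of the Baily--Borel compactification $X^U$, one has a perfect Hecke-equivariant intersection pairing, so it is enough to exhibit a class $\alpha$ in the $\pi^{\vee,\infty}$-isotypic part of $IH^d(X^{U_{1}(\cc)}(\CC))$ with $\langle [Z^{K,\theta}],\alpha\rangle\neq 0$. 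First I would unwind this pairing: using the projection formula for the embedding $Y_K^{U_K}\hookrightarrow Y^{U_{1}(\cc)}$ (which extends to the Baily--Borel compactifications, so that $[Z^K]$ is a genuine intersection-cohomology class), the definition of $\mathcal{T}_\theta(\cc\bb^2)_*$, and the $(\mathfrak g, U_\infty)$-cohomological description of the $\pi$-isotypic part, the pairing $\langle [Z^{K,\theta}],\alpha\rangle$ becomes a period integral of the form
$$
\int_{\GL_2(K)\backslash \GL_2(\A_K)}\varphi(h)\,\theta(\det h)\,dh,
$$
where $\varphi$ runs over the restrictions to $\GL_2(\A_K)$ of automorphic forms in the space of $\pi$ coming from cohomological vectors of $H^d(\mathfrak g, U_\infty;\pi_\infty)$. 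Thus the theorem reduces to showing that $\pi$ is $(\theta\circ\det)$-distinguished by $\GL_2(\A_K)$.

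To evaluate this period I would invoke the Rankin--Selberg integral representation of the Asai $L$-function (Asai's method, applied over the quadratic extension $E/K$, as made explicit in the cohomological setting by \cite{HLR}, \cite{MR} and \cite{GG}): up to an explicit product of local factors, the period equals the residue at $s=1$ of the twisted Asai $L$-function $L(s,\pi,\As_{E/K}\otimes\theta^{-1})$. Since $\pi$ is the base change of $\pi_K$, we have $\pi^\sigma\cong\pi$ for the nontrivial $\sigma\in\Gal(E/K)$; the Asai representation $\As_{E/K}(\rho_{\pi,\iota})$ then decomposes as $\Sym^2\rho_{\pi_K,\iota}$ plus a one-dimensional piece which is essentially $\det\rho_{\pi_K,\iota}$ times the quadratic character of $E/K$, and the twisted Asai $L$-function acquires a simple pole at $s=1$ precisely when $\theta|_{\A_K^\times}$ agrees with the central character $\omega_K$ of $\pi_K$ up to that quadratic character --- which is exactly the hypothesis $\theta|_{\A_K^\times}=\omega_K\eta$ with $\langle\eta\rangle=\Gal(E/K)^\wedge$ (this is the $\GL_2$-case of the Asai/Flicker criterion for distinction with a twist). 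The residue is then a non-zero multiple of a residue of $\zeta_K$ (equivalently of $L(1,\pi_K,\mathrm{Ad})$ in the other normalization), hence non-zero. Finally one checks the discarded local factors are non-zero, the only delicate points being the real places --- where one verifies that the restriction to the archimedean symmetric space of $\GL_2(\A_K)$ of the harmonic representative of the lowest-weight discrete-series class is non-zero, a local computation as in \cite{HLR} --- and the places dividing $\cc\bb^2$, where a suitable test vector makes the zeta integrals non-zero. This produces the required $\alpha$.

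The technical heart, and the main obstacle, is this unwinding: faithfully converting the geometric cycle-class pairing --- incorporating the twisting correspondence $\mathcal{T}_\theta(\cc\bb^2)_*$ and the passage to the Baily--Borel compactification --- into the automorphic period above, and then matching the residue of the twisted Asai $L$-function so that the character hypothesis $\theta|_{\A_K^\times}=\omega_K\eta$ is recognized as exactly the non-vanishing condition. Much of this can be imported from \cite{HLR}, \cite{MR} and the exposition of \cite{GG}; the new point relative to the classical case $K=\QQ$, $d=2$ is merely that $Y_K^{U_K}$ is now itself a Hilbert modular variety of dimension $d/2$ and that one works with the Asai $L$-function over $E/K$ rather than over $E/\QQ$, but Asai's Rankin--Selberg method and the distinction criterion hold over any quadratic extension of number fields, so no new difficulty arises. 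The remainder is bookkeeping with the levels $\cc$, $\bb$, $\cc\bb^2$ and with the twisting character.
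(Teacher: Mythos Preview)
Your proposal is correct and follows the same initial reduction as the paper: pass to Betti cohomology via the comparison isomorphism, identify $M_B^{U_1(\cc)}$ with middle-degree intersection cohomology of $X_{1}(\cc)$, and reduce to showing that the intersection-cohomology class attached to $[Z^{K,\theta}]$ pairs nontrivially with the $\pi^\infty$-isotypic component. From that point on the two diverge in presentation rather than in substance. The paper's proof stops there and simply invokes \cite[Theorem~10.1]{GG} as a black box for the nontrivial pairing. What you have written is, in effect, a sketch of the proof of that theorem: unfold the pairing to the $\theta$-twisted $\GL_2(\A_K)$-period of a cusp form in $\pi$, identify this period (via Asai's Rankin--Selberg integral over the quadratic extension $E/K$) with the residue at $s=1$ of the twisted Asai $L$-function, and then use the base-change hypothesis together with the Flicker-type distinction criterion to see that $\theta|_{\A_K^\times}=\omega_K\eta$ is exactly the condition guaranteeing the pole. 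Your handling of the local issues (archimedean harmonic representatives, test vectors at bad places) and of the generalization from $K=\QQ$ to an arbitrary totally real $K$ with $[E:K]=2$ is also what the argument in \cite{GG} does. So the two proofs are the same at the level of ideas; the paper's is a one-line citation, yours an outline of the cited result.
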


\noindent The statement that $\langle \eta \rangle=\Gal(E/K)^{\wedge}$ is simply the statement that $\eta$ is the Hecke character of $K$ corresponding to $E/K$ by class field theory.

\begin{proof}
In view of the compatibility of the construction of  $[Z^{K,\theta}]$ with the comparison isomorphism $M_B^{U_{1}(\cc)}(\pi^{\infty}) \cong M_{\et,\iota}^{U_{1}(\cc)}(\pi^{\infty})$ it suffices to show that under the given assumption $[Z^{K,\theta}]$ is nontrivial in $M_B^{U_{1}(\cc)}(\pi^{\infty})(d/2)$.  There is a canonical Hecke-equivariant isomorphism
$$
M_B^{U_{1}(\cc)}(d/2) \cong IH^d(X_{1}(\cc)(\CC),\CC(d/2))
$$
(see the remark below \cite[Theorem 7.1]{GG}).  Therefore it suffices to show that the canonical class attached to $[Z^{K,\theta}]$ in the intersection homology group $IH_d(X_{1}(\cc)(\CC),\CC(d/2))$ constructed in \cite[Chapter 9]{GG} pairs nontrivially with the $\pi^{\infty}$-isotypic component of $IH^d(X_{1}(\cc)(\CC),\CC(d/2))$ under the given assumption. 
By \cite[Theorem 10.1]{GG} the canonical class does pair nontrivially with the $\pi^{\infty}$-isotypic component and
this completes the proof of the theorem.  
\end{proof}


\section{Statement and proof of main theorem} \label{sec-main}

Let $E/\QQ$ be a totally real number field with \textit{even} degree $d$.  Let $\pi$ be a cuspidal automorphic representation of $\GL_2(\A_E)$ with
$H^d(\mathfrak{g},U_{\infty}; \pi^{\infty}) \neq 0$, and let $\rho_{\pi, \iota}:\Gal_E \to \GL_2(\bar{\QQ}_{\ell})$ be the $\ell$-adic Galois representation attached to $\pi$ and the isomorphism $\iota:\CC \to \bar{\QQ}_{\ell}$.
We are interested in providing algebraic cycles to account for classes in 
$$
M_{\et,\iota}^{U,ss}(\pi^{\infty})(d/2)^{\Gal_k}
$$
where $k/\QQ$ is a finite extension and $U \leq \GL_2(\A_E^{\infty})$ is a compact open subgroup.  The Tate conjecture (suitably generalized to the non-projective case) implies that 
this space should be spanned by (classes) of algebraic cycles over $k$.   
If $M_{\et,\iota}^{U, ss}(\pi^{\infty})(d/2)^{\Gal_k} =0$, then it is trivially spanned by algebraic cycles.  We deal with the next simplest case:


\begin{thm}\label{main}
 Let $U \leq \GL_2(\A_E^{\infty})$ be a compact open subgroup and let $\pi$ be nondihedral. Suppose that the rank of $M_{\et,\iota}^{U, ss}(\pi^{\infty})(d/2)^{\Gal_{k}}$ as a $C_c^{\infty}(\GL_2(\A_E^{\infty})//U)$ module is at most $1$ for all abelian extensions $k/E$.  If $\ell$ is sufficiently large in a sense depending on $\pi$ then $M_{\et,\iota}^{U,ss}(\pi^{\infty})(d/2)^{\Gal_k}$ is spanned by algebraic cycles for all finite extensions $k/\QQ$. 
\end{thm}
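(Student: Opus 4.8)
The plan is to combine the structural computation of Tate classes (Proposition~\ref{one-motive}) with the explicit supply of algebraic cycles provided by the twisted Hirzebruch--Zagier cycles (Theorem~\ref{nontrivial}). First I would reduce to the case where $M_{\et,\iota}^{U,ss}(\pi^{\infty})(d/2)^{\Gal_k}$ is nonzero for some finite $k/\QQ$; otherwise the space is trivially spanned by algebraic cycles. By Proposition~\ref{one-motive} (applied after replacing $k$ by $kk'$ where $k'$ is the abelian extension of $E$ appearing there), the dimension of the space of Tate classes is governed by $m=|\Gal(E/\QQ)_{\rho_{\pi,\iota}}|$, and since $\ell$ is large the relevant hypothesis ($\bar\rho_{\pi,\iota}(\Gal_E)\supseteq\SL_2(\FF_\ell)$) holds. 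The rank-$\le 1$ hypothesis on abelian extensions $k/E$, via Corollary~\ref{one-K}, forces that either there are no Tate classes at all (when $m$ is odd, or $m$ even but the binomial expression exceeds one after taking the $d/m$ power — which for $m>2$ it does), or $m=2$ and $d/m=\dots$ must also be constrained; in the surviving case $m=2$ means there is a quadratic subfield $K$ with $E\ge K\ge\QQ$, $\Gal(E/K)=\langle\sigma\rangle$, and a character $\chi$ with $\rho_{\pi,\iota}^{\sigma}\cong\rho_{\pi,\iota}\otimes\chi$. This projectivity relation is exactly the condition under which $\pi$ is a base change from $\GL_2(\A_K)$: I would invoke the descent results for $\GL_2$ (cyclic base change of Langlands, as used in \cite{HLR}, \cite{MR}) to conclude $\pi=BC_{E/K}(\pi_K)$ for a cuspidal $\pi_K$ on $\GL_2(\A_K)$ with some central character $\omega_K$.

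Next I would produce the cycle. Having $\pi=BC_{E/K}(\pi_K)$, take $\eta$ to be the quadratic Hecke character of $K$ cutting out $E/K$, and choose a finite-order Hecke character $\theta$ of $E$ whose restriction to $\A_K^\times$ is $\omega_K\eta$ (such $\theta$ exists by surjectivity of restriction on idele class character groups, after twisting by an extension of $\omega_K\eta$ along $\A_K^\times\hookrightarrow\A_E^\times$; one also arranges $\theta$ to have conductor $\bb$ so that $\pi^\infty\otimes\theta^\infty\cong\pi^\infty$ at the relevant level, matching the setup of \S\ref{sec-HZ}). Then Theorem~\ref{nontrivial} gives that the twisted Hirzebruch--Zagier cycle $[Z^{K,\theta}]$ projects nontrivially onto $M_{\et,\iota}^{U_1(\cc)}(\pi^{\infty})(d/2)$. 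This is an algebraic cycle (a twist, by the correspondences of \S\ref{sec-twist}, of the fundamental class of the sub-Shimura-variety $Y_K^{U_K}$), defined over the abelian extension $E_\theta/E$. Since the target space $M_{\et,\iota}^{U,ss}(\pi^{\infty})(d/2)^{\Gal_k}$ is assumed to be of rank at most one as a Hecke module, a single nonzero algebraic class together with its Hecke translates spans it; one then descends from $E_\theta$ (where the cycle lives) to the given base field $k$ using that the Tate class, being Galois-fixed over $k$, is a Hecke-linear combination of Galois conjugates of $[Z^{K,\theta}]$, all of which are algebraic. Taking a trace/corestriction over $\Gal(E_\theta k/k)$ of the cycle produces an algebraic cycle over $k$ whose class is a nonzero multiple of the Tate class (nonvanishing because the rank-one Hecke module has no higher multiplicity in which cancellation could occur — here one uses that the relevant Galois action permutes a basis of Hecke-isotypic lines compatibly). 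Passing between $U_1(\cc)$ and a general level $U$ is handled by the usual pullback/pushforward along the covering maps of Shimura varieties, which send algebraic cycles to algebraic cycles.

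The main obstacle I expect is the last descent step: turning the nontriviality of $[Z^{K,\theta}]$ over $E_\theta$ into a genuine algebraic cycle over the given $k$ whose class equals (up to nonzero scalar and Hecke operator) the prescribed Tate class, together with making sure that for \emph{every} finite $k/\QQ$ — not just those containing $E_\theta$ — the space of Tate classes is hit. Concretely, one must check that when $M_{\et,\iota}^{U,ss}(\pi^{\infty})(d/2)^{\Gal_k}$ is one-dimensional over the Hecke algebra, the corestriction $\mathrm{cor}_{E_\theta k/k}[Z^{K,\theta}]$ does not vanish; this amounts to verifying that the Galois group acts on the (at most one-dimensional) Tate space through a \emph{trivial} character, which follows because the class of $[Z^{K,\theta}]$ is itself Tate (it sits in $M_?^U(d/2)$ and is a cycle class, hence Galois-invariant up to the abelian action recorded by $E_\theta$, and the explicit character computed in Proposition~\ref{one} is the one realized). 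A secondary, more bookkeeping obstacle is ruling out the case $m$ even with $\binom{m}{m/2}-\binom{m}{m/2-1}>1$ raised to a positive power still giving Hecke-rank $>1$ — this is where the hypothesis that the rank over all abelian $k/E$ is at most one does real work, via Proposition~\ref{one} forcing $m=2$ and the exponent $d/m$ to be compatible with rank one. Once these points are secured, assembling the argument is routine.
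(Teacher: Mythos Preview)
Your overall strategy matches the paper's, but there is a genuine gap at the base-change step. From $m=2$ you correctly extract a quadratic subfield $K$ with $\Gal(E/K)=\langle\sigma\rangle$ and a character $\chi$ with $\rho_{\pi,\iota}^{\sigma}\cong\rho_{\pi,\iota}\otimes\chi$. You then assert that ``this projectivity relation is exactly the condition under which $\pi$ is a base change from $\GL_2(\A_K)$.'' It is not: cyclic base change descent requires $\pi^{\sigma}\cong\pi$, i.e.\ $\chi$ trivial, and in general $\chi$ need not be trivial. The paper closes this gap by invoking Lapid--Rogawski \cite[Theorem~2]{LapRog}: since $\pi$ is nondihedral (this is where that hypothesis is used), one can write $\chi=\mu\mu^{-\sigma}$ for some character $\mu$, whence $(\rho_{\pi,\iota}\otimes\mu)^{\sigma}\cong\rho_{\pi,\iota}\otimes\mu$, and then Langlands' cyclic base change gives $\pi\otimes\mu\cong\pi_{0E}$ for some cuspidal $\pi_0$ on $\GL_2(\A_K)$. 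Without this step your appeal to Theorem~\ref{nontrivial} is not justified, because its hypothesis is that $\pi$ itself (not merely a twist) is a base change.

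This correction also changes the endgame. The twisted Hirzebruch--Zagier cycle $[Z^{K,\theta}]$ now lands nontrivially in $M_{\et,\iota}^{U_1(\cc_0)}(\pi_{0E}^{\infty})(d/2)$, i.e.\ in the $(\pi\otimes\mu)$-isotypic piece, not in the $\pi$-isotypic piece. The paper transports it back to $\pi$ using the twisting correspondences of \S\ref{sec-twist}: one applies $\mathcal{T}_{\mu}(\cc\bb^2)_*\mathcal{T}_{\mu}(\cc\bb^2)^*$ to a generator $\phi$ of the rank-one Tate space for $\pi$ and observes that the result is a nonzero scalar multiple of $\phi$ which is visibly the class of an algebraic cycle. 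Your descent discussion via corestriction is more elaborate than what is needed; once the cycle is produced over a sufficiently large field, the paper simply uses that $\Gal_{\bar{\QQ}}$ acts compatibly on algebraic cycles and on Tate classes to conclude for all finite $k/\QQ$. Your bookkeeping remark that the rank-one hypothesis forces $m=2$ is correct (for $m=2$ the binomial factor is $1$, and for any even $m\geq 4$ it is at least $2$).
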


We will prove the theorem by showing that the only way for the space of Tate classes to be nonzero is if it is spanned by a Hirzebruch-Zagier cycle or one of its twists.

\begin{proof}[Proof of Theorem \ref{main}] 

Suppose that as a $C_{c}^{\infty}(\GL_2(\A_E^{\infty})//U)$ module, $M_{\et,\iota}^{U,ss}(\pi^{\infty})(d/2)^{\Gal_k}$ is of rank at most $1$ for all abelian extensions $k/E$.  Then, assuming $\ell$ is sufficiently large, from Proposition \ref{one-motive} we have that it is of rank $0$ for all finite extensions $k/E$ or it is of rank $1$ for all sufficiently large finite extensions $k/E$.  In the former case we have nothing to prove, so assume that it is of rank $1$ as a $C_c^{\infty}(\GL_2(\A_E^{\infty})//U)$-module for some $k/E$.  From Proposition \ref{one-motive} 
we then have that 
 $\Gal(E/\QQ)_{\rho_{\pi,\iota}}$ has order $2$.

Let $\QQ\leq K\leq E$ be the subfield of $E$ fixed by $\Gal(E/\QQ)_{\rho_{\pi,\iota}}$ and let $\Gal(E/K)=\langle\sigma\rangle$.  Choose a character $\chi: \Gal_E \to \GL_1(\bar{\QQ}_{\ell})$ such that $\rho_{\pi,\iota}^{\sigma} \cong \rho_{\pi ,\iota} \otimes \chi$.   Applying \cite[Theorem 2]{LapRog} and our assumption that $\pi$ is not dihedral we can and do write $\chi=\mu\mu^{-\sigma}$ for some character $\mu$. Therefore 
$$
\rho_{\pi,\iota}\otimes (\mu\mu^{-\sigma})\cong\rho_{\pi,\iota}^{\sigma}
$$which implies in turn that 
$$
\rho_{\pi,\iota}\otimes \mu\cong (\rho_{\pi,\iota}\otimes\mu)^{\sigma}.
$$
This implies, by cyclic base change \cite[\S 2]{Langlands}, that there exists a cuspidal automorphic representation $\pi_0$ of $\GL_2(\A_K)$ such that
$$\pi \otimes \mu\cong\pi_{0 E},$$
where $\pi_{0 E}$ is the base change of $\pi_0$ to $E$. Choose a  finite order character $\theta$ of $\A_E^{\times}$ such that $\theta|_{\A_K^{\times}}=\omega_{\pi_0}\eta$, where $\omega_{\pi_0}$ is the central character of $\pi_0$ and $\eta$ is the character attached to $E/K$ by class field theory (note that such a character exists by \cite[Lemma 2.1]{Hida}).  Let $\cc_0$ denote the conductor of $\pi_{0 E}$.
Then $[Z^{K,\theta}]$ projects nontrivially to $M_{\et,\iota}^{U_{1}(\cc_0)}(\pi_{0 E}^{\infty})(d/2)$ by Theorem \ref{nontrivial}. 
 By newform theory, the fact that $\pi_{0E}$ is a twist of $\pi$, and our assumptions, we have that
 $M_{\et,\iota}^{U_{1}(\cc_0),ss}(\pi_{0E}^{\infty})(d/2)^{\Gal_k}$ is of rank $1$ for all sufficiently large number fields $k$ and any element of it is a generator for $M_{\et,\iota}^{U, ss}(\pi_0^{\infty})(d/2)^{\Gal_k}$ as a $C_c^{\infty}(\GL_2(\A_E^{\infty})//U)$-module for any compact open subgroup $U \leq \GL_2(\A_E^{\infty})$.  

Now let $\cc$ be the conductor of $\pi$, let $k$ be a sufficiently large number field, and let $\phi \in M_{\et,\iota}^{U_{1}(\cc),ss}(\pi^{\infty})(d/2)^{\Gal_k}$ be a generator for this one-dimensional space.  If $\bb$ denotes the conductor of $\mu$, we have that
\begin{align}
0 \neq \mathcal{T}_{\mu}(\cc \bb^2)^*\phi \in M_{\et,\iota}^{U_{1}(\cc \bb^2),ss}(\pi_{0 E}^{\infty})(d/2)^{\Gal_k}
\end{align}
Thus
$$
\mathcal{T}_{\mu}(\cc \bb^2)_*\mathcal{T}_{\mu}(\cc \bb^2)^*\phi,
$$
which is a scalar multiple of $\phi$, is the class of an algebraic cycle.  Applying newform theory again we see that 
 $M_{\et,\iota}^{U,ss}(\pi^{\infty})(d/2)^{\Gal_k}$ is spanned by algebraic cycles for all sufficiently large number fields $k$ and all compact open subgroups $U \leq \GL_2(\A_E^{\infty})$.
 Since there is a compatible action of $\Gal_{\bar{\QQ}}$ on the space of algebraic cycles and the space of Tate classes, the theorem follows.

\end{proof}

\section*{Acknowledgement}

The authors thank the referee for useful comments on this paper.



\begin{thebibliography}{}

\bibitem[BBD]{FP} A.~A.~Beilinson, J.~Bernstein, and P.~Deligne, \emph{Fasceaux pervers}, \textbf{Analyse et topologie sur les espaces singuliers}, Asterisque {\bf 100}, 1982.

\bibitem[BlR]{BlR} D.~Blasius and J.~D.~Rogawski, \emph{Motives for Hilbert modular forms}, Invent. Math. \textbf{14} (1993), 55--87.


\bibitem[BoCa]{BoCa} A.~Borel and W.~Casselman, \emph{$L^2$-cohomology of locally symmetric manifolds of finite volume}, Duke Math.~J. {\bf 50} No.~3 (1983), 625-647.

\bibitem[Bou]{Bourbaki} N.~Bourbaki, \textbf{Elements of Mathematics: Algebra I Chapters 1-3}, Springer, 2006.

\bibitem[BrL]{BrL} J.-L. Brylinkski and J.-P. Labesse, \emph{Cohomologie d' intersection et fonctions $L$ de certaines vari\'et\'es de Shimura}, Ann. Sci. ENS, \textbf{17} Issue 3 (1984), 361--412.

\bibitem[Del]{Deligne} P.~Deligne \emph{Travaux de Shimura}, S\'eminaire Bourbaki (1970/71), LNM {\bf 244} (1971),123-165.

\bibitem[Di]{Dimi} M.~Dimitrov, \emph{Galois representations modulo $p$ and cohomology of Hilbert modular varieties}, Ann. Sci. ENS, \textbf{38} Issue 4 (2005), 505--551.

\bibitem[GG]{GG} J.~R.~Getz and M.~Goresky, \textbf{Hilbert modular forms with coefficients in intersection homology and quadratic base change}, Progress in Mathematics \textbf{298}, Birkh\"auser Verlag, 2012.


\bibitem[Ha]{Harder} G.~Harder, \emph{Eisenstein cohomology of arithmetic groups. The case $\mathrm{GL}_2$}, Invent. Math., \textbf{89} No.~1 (1987), 37--118.


\bibitem[HLR]{HLR} G.~Harder, R.~P.~Langlands, and M.~Rapoport, \emph{Algebraische Zyklen
auf Hilbert-Blumenthal-Fl\"achen}, J. Reine Angew. Math. {\bf 366} (1986), 53--120.



\bibitem[Hi]{Hida} H.~Hida, \emph{Non-critical values of adjoint $L$-functions for $\SL(2)$}, Proc. Symp. Pure Math. \textbf{66} Part I (1999), 123--175.

\bibitem[LaR]{LapRog} E.~Lapid and J.~Rogawski, \emph{On twists of cuspidal representations of $\GL(2)$}, Forum Mathematicum {\bf 10} (1998), 175-197.

\bibitem[L]{Langlands} R.~P.~Langlands, \textbf{Base Change for $\GL(2)$}, Annals of Mathematics Studies {\bf 96}, Princeton University Press, 1980.


\bibitem[Mi]{Mi} J.~S.~Milne, \emph{Introduction to Shimura varieties}, Harmonic analysis, the trace formula and Shimura varieties, Clay Math. Proc. \textbf{4}, 2003.

\bibitem[MR]{MR} V.~K.~Murty and D.~Ramakrishnan, \emph{Period relations and the Tate conjecture for Hilbert modular surfaces}, Invent. Math. \textbf{89} (1987), 319--345. 





\bibitem[Ra]{RamaHil} D.~Ramakrishnan, \emph{Algebraic cycles on Hilbert modular fourfolds and poles of $L$-functions},  \textbf{Algebraic groups and arithmetic}, Tata Inst. Fund. Res., Mumbai (2005), 221--274.


\bibitem[Ri]{Ribet} K.~A.~Ribet, \emph{On $\ell$-adic representations attached to modular forms}, Invent.  Math. \textbf{28} (1975), 245--275.

\bibitem[T]{Tate} J.~Tate, \emph{Algebraic cycles and poles of zeta functions}, \textbf{Arithmetical Algebraic Geometry} (Proc. Conf. Purdue Univ., 1963), 93--110, Harper and Row, New York, 1965. 

\bibitem[Ta]{T} R.~Taylor, \emph{On Galois representations associated to Hilbert modular forms}, Invent. Math. \textbf{98} (1989), 265--280.

\end{thebibliography}
\end{document}